\newtheorem{thm}{Theorem}
\newtheorem{prop}{Proposition}
\newtheorem{lem}{Lemma}
\newtheorem{definition}{Definition}[subsection]
\newcommand{\opname}[1]{\operatorname{\textup{\textsf{#1}}}}
\newcommand{\size}{\opname{size}}
\newcommand{\lat}{\opname{lat}}
\newcommand{\Long}{\opname{long}}
\newcommand{\FF}{\opname}
\newcommand{\out}{\opname{out}}
\newcommand{\jaw}{\opname{jaw}}
\title{A bijection between rooted planar maps  and generalized fighting fish}
\author{Enrica Duchi}
\address{IRIF, Universit\'e Paris Cit\'e, France}
\email{duchi@irif.fr}
\author{Corentin Henriet}
\address{IRIF, Universit\'e Paris Cit\'e, France}
\email{henriet@irif.fr}
\begin{document}

	\begin{abstract}
		The class of fighting fish is a recently introduced model of branching surfaces generalizing parallelogram polyominoes. We can alternatively see them as gluings of cells, walks on the square lattice confined to the quadrant or shuffle of Dyck words. With these different points of view, we introduce a natural extension of fighting fish that we call \emph{generalized fighting fish}. We show that generalized fighting fish are exactly the Mullin codes of rooted planar maps endowed with their unique rightmost depth-first search spanning tree, also known as Lehman-Lenormand code. In particular, this correspondence gives a bijection between fighting fish and nonseparable rooted planar maps, enriching the garden of bijections between classes of objects enumerated by the sequence $\frac{2}{(n+1)(2n+1)} \binom{3n}{n}$. \\
		\textbf{Key words :} bijective combinatorics, fighting fish, Tamari intervals, Tamari distance
	\end{abstract}
	
	\maketitle

Fighting fish were first defined in 2016 by Duchi et al. in \cite{ff} as a model of branching surfaces that is a generalization of parallelogram polyominoes. While parallelogram polyominoes are enumerated according to their halfperimeter by the Catalan numbers, fighting fish are counted by the sequence $\frac{2}{(n+1)(2n+1)} \binom{3n}{n}, n \geq 1$. It is the sequence $A000139$ of the OEIS \cite{OEIS} and it counts some other combinatorial objects : nonseparable planar maps, synchronized intervals of the Tamari lattice, left ternary trees, two-stack sortable permutations. In the last few years, a growing number of articles has been dealing with the problem of connecting bijectively these different classes of objects in order to have a better combinatorial understanding of this family. The following diagram summarizes the currently known bijections, dashed (resp. plain) arrows indicating recursive (resp. direct) bijections.

\[\begin{tikzcd}[arrows={<->},row sep=30pt, column sep=80pt]
\substack{\text{nonseparable rooted}\\ \text{planar maps}}
\arrow[ rr,"\text{\cite{DDP}, \cite{JS}, \cite{Schaeffer-these}}"] \arrow[dr,color=red,"\text{this article}"] \arrow[dd,"\text{\cite{fangnonsep}}"] & & \text{left ternary trees}\\
 & \text{fighting fish} \arrow[dashed, dr,"\text{\cite{FangFish}}"] & \\
\substack{\text{synchronized intervals}\\ \text{of the Tamari lattice}} \arrow[ur,"\text{\cite{EFF}}"] & \substack{{}\\{}\\{}\\\text{\cite{GW96}, \cite{DGW96}}} & \substack{\text{two-stack sortable}\\ \text{permutations}} \arrow[dashed, rounded corners, to path={ -- ([yshift=-2ex]\tikztostart.south) -| ([xshift=-1.5ex]\tikztotarget.west) -- (\tikztotarget)}]{uull}
\end{tikzcd}\]

The present article is the second chapter of an extended version of a 12 page article accepted at the FPSAC 2022 conference \cite{fpsac22}, the first chapter being \cite{EFF} dealing with the bijective link between fighting fish and intervals of the Tamari lattice. The objective of this writing is twofold. The first purpose is to obtain a direct bijection between fighting fish and nonseparable planar maps. This bijection is in fact not new : it is the restriction of a construction proposed by Mullin 50 years ago encoding a planar map with a distinguished spanning tree by its contour word. However, it was not realized before that Mullin's bijection is mapping nonseparable planar maps (endowed with their rightmost depth-first search spanning tree) to fighting fish. In a second time, we define a natural generalization of fighting fish that we call generalized fighting fish, and we prove that they are exactly the Mullin encodings of rooted planar maps endowed with their rightmost DFS spanning tree also known as Lehman-Lenormand code \cite{Le70}, see also \cite[Chapter 2, II.3]{Co75}. It is important to remark that while the first article \cite{EFF} deals with \emph{extended} fighting fish, this one introduces \emph{generalized} fighting fish : these two classes of objects are two different extensions of the previously known class of fighting fish.

\section{Introduction and motivation}

\subsection{Fighting fish}

A complete presentation of fighting fish has been made in the introductive articles \cite{ff} and \cite{ff2}, and we restate here their definitions in the language of cell gluings used in these papers.
\begin{figure}[H]
	\centering
	\includegraphics[page=1,scale=0.6]{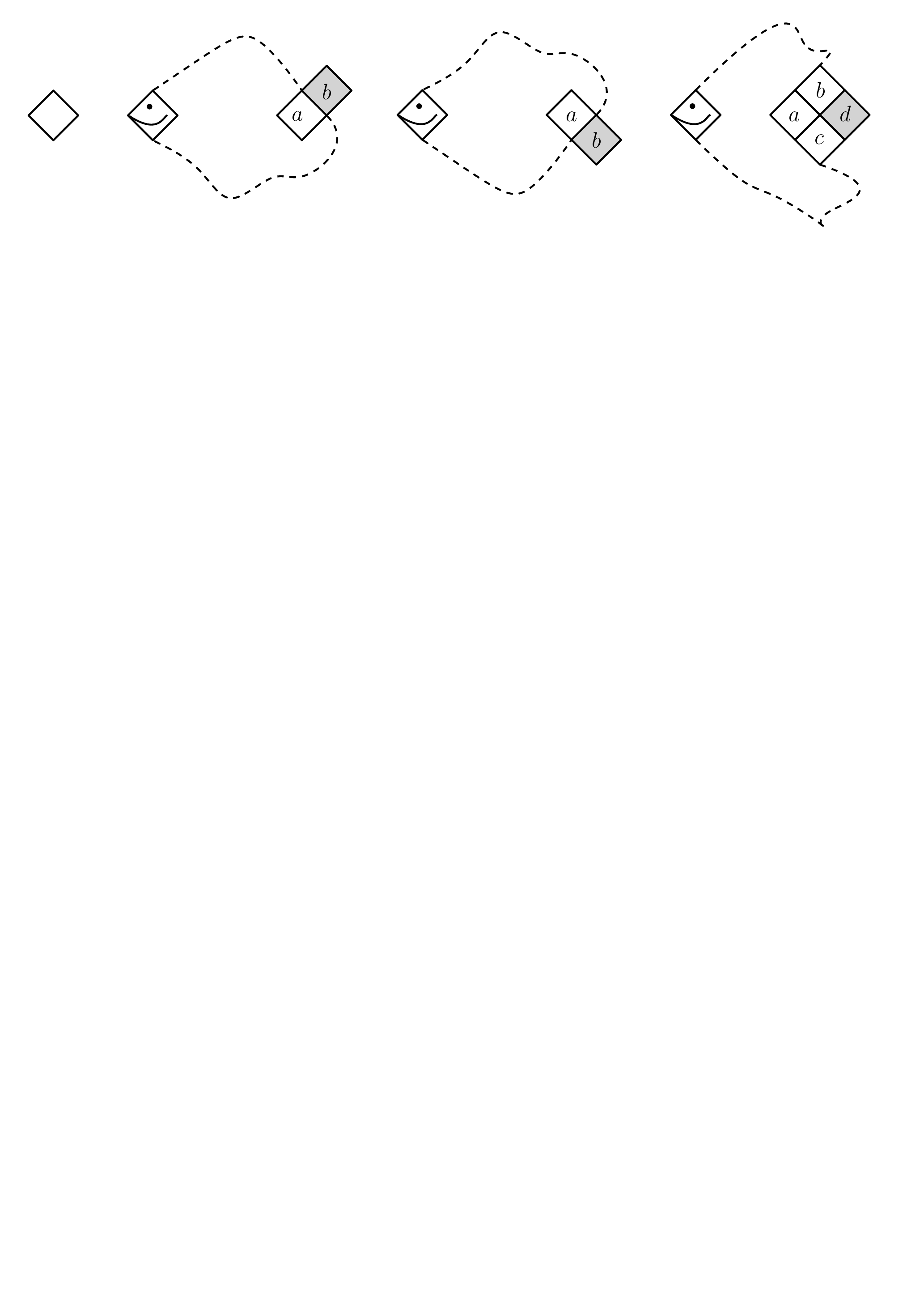}
	\caption{A cell and operations of upper, lower and double gluing.}
	\label{fig:gluings}
\end{figure}

A \emph{cell} is a 45 degree tilted unit square whose boundary is made out of four edges called \emph{left lower edge}, \emph{right lower edge}, \emph{right upper edge} and \emph{left upper edge}. We intend to build fighting fish as sets of cells glued together along some of their edges, so we define an edge of a cell to be \emph{free} if it is not glued to the edge of another cell. A \emph{fighting fish} is a finite set of cells constructed starting with an initial cell (the \emph{head}), then attaching one by one new cells using one of the three following operations (represented in Figure~\ref{fig:gluings}) :
\begin{itemize}
	\item \emph{Upper gluing} : Let $a$ be a cell in a fish whose right upper edge is free ; we glue the left lower edge of a new cell $b$ to the right upper edge of $a$.
	\item \emph{Lower gluing} : Let $a$ be a cell in a fish whose right lower edge is free ; we glue the left upper edge of a new cell $b$ to the right lower edge of $a$.
	\item \emph{Double gluing} : Let $a$, $b$ and $c$ three cells in a fish such that $b$ (resp. $c$) has its left lower (resp. upper) edge glued to the right upper (resp. lower) edge of $a$, and the right lower (resp. upper) edge of $b$ (resp. $c$) is free ; we glue both the left upper and left lower edges of a new cell $d$ respectively to the right lower edge of $b$ and to the right upper edge of $c$.
\end{itemize}

The \emph{size} of a fighting fish is the number of its free lower or upper edges, that is half of the length of its boundary. We denote by $\mathcal{FF}$ the class of {\em fighting fish}. Moreover, we denote by ${ff}_n= |\mathcal{FF}_n|$  the number of fighting fish with size $n$ and by  ${ff}_{i,j}=|\mathcal{FF}_{i,j}|$ the number of fighting fish with $i$ left lower free edges (or $i$ left upper free edges) and $j$ right lower free edges (or $j$ right upper free edges).

While the description of fighting fish is iterative, we are interested in these objects independently of the order in which they are constructed. There can then be multiple ways to grow a given fighting fish with these operations. We also want to emphasize that fighting fish are not planar objects in the sense that we cannot always fit them in the plane because some unit squares would represent two or more different cells. Still, we will present them in our two-dimensional pictures by taking care of showing which cells are glued together (Figure \ref{fig:ffexample}) provides an example of two possible representations of the same fighting fish).

\begin{figure}[H]
	\centering
	\includegraphics[page=17,scale=0.7]{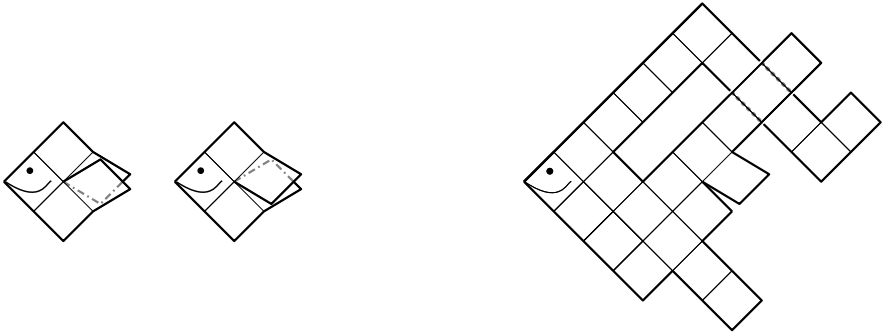}
	\caption{Two representations of the same fighting fish of size 6 and a fighting fish of size 23.}
	\label{fig:ffexample}
\end{figure}

Let us call \emph{nose} the leftmost point of the head of a fighting fish. Another way to think of a fighting fish is to perform its counterclockwise tour : if we follow the boundary of a fighting fish counterclockwise, starting from the nose, we encounter all its free edges once until getting back to it. By encoding each type of free edge by a step in $\{E, N, W, S\}$ : $E=(1,0)$ (resp. $S=(0,-1)$) for a left lower (resp. upper) one and $N=(0,1)$ (resp. $W=(-1,0)$) for a right lower (resp. upper) one obtains an excursion (walk starting and ending at the origin) on the square lattice $\mathbb{Z}^2$ confined to the quadrant $\{x,y\geq 0\}$. Then we can see fighting fish as excursions or words on the alphabet $\{E, N, W, S\}$, obtained from the word $ENWS$ using the three operations :
\begin{itemize}
	\item Upper gluing : replace a subword $W$ by $NWS$.
	\item Lower gluing : replace a subword $N$ by $ENW$.
	\item Double gluing : replace a subword $WN$ by $NW$.
\end{itemize}

For a word $w \in \{E,N,W,S,V\}^*$, we define its \emph{latitude} and \emph{longitude} to be the $x$ and $y$ coordinates of the endpoint of its corresponding walk on $\mathbb{N}^2$ starting from $(0,0)$. We then have :
\begin{align*}
\lat(w) &= |w|_N - |w|_S \\
\Long(w) &= |w|_E - |w|_W
\end{align*}

\subsection{Planar maps}

Planar maps are classical objects in combinatorics (see~\cite{chapterMaps} for more details) and while many equivalent definitions are possible, we present them as graph embeddings on the plane in order to be as graphical as possible in our setting.

\begin{definition}
	A {\em planar map} is a proper embedding of a connected graph on the plane, defined up to continuous deformations. A planar map splits the plane into \emph{edges}, \emph{vertices} and \emph{faces}, where faces are the connected components of the plane deprived of edges and vertices. A \emph{corner} is an incidence between a vertex and a face. As usual we consider {\em rooted planar maps}, where a corner on the boundary of the infinite face is distinguished. This corner is called the root, while the infinite face is also called the root face, the edge preceeding the root in counterclockwise order on the boundary of the infinite face is called the root edge  and the vertex in the root corner is called the root vertex. We denote by $\mathcal{M}$ the class of rooted planar maps (see Figure~\ref{Fig:SepAndNSMaps}$(a)$) for an example), by $\mathcal{M}_n$ the set of planar maps with $n$ edges, and by  $\mathcal{M}_{i,j}$ the set of planar maps with $i+1$ vertices and $j+1$ faces (that have hence $i+j$ edges from the Euler relation).\\
	\begin{figure}[H]
		\centering
		\includegraphics[page = 8, scale=.8]{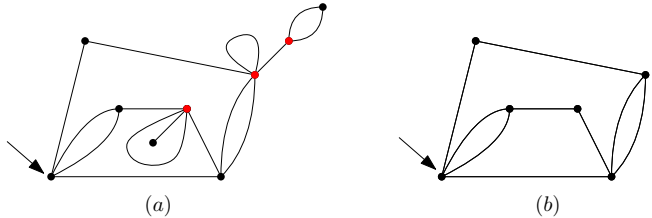}
		\caption{$(a)$ A rooted planar map with its separable vertices in red ; $(b)$ a rooted nonseparable planar map}
		\label{Fig:SepAndNSMaps}
	\end{figure}
	A {\em nonseparable planar map} is a rooted planar map without {\em separable vertices}, i.e. vertices that can be cut to disconnect the map (see Figure~\ref{Fig:SepAndNSMaps}$(b)$ for an example). We denote by $\mathcal{NS}$ the class of nonseparable planar maps, by $\mathcal{NS}_n$ the set of nonseparable planar maps with $n$ edges, and by $\mathcal{NS}_{i,j}$ the set of nonseparable planar maps with $i+1$ vertices and $j+1$ faces.\\
	For a rooted planar map $\FF{M}$, its \emph{dual map} $\FF{M}^*$ (see Figure \ref{dualmap}) is the planar map whose vertices are the faces of $\FF{M}$ whose set of edges is built in the following way : to each edge $e$ of $\FF{M}$, if we denote by $f_1$ and $f_2$ the two faces adjacent to $e$, there is an edge $e^*$ in $\FF{M}^*$ linking the vertices $v_{f_1}$ and $v_{f_2}$ respectively corresponding to faces $f_1$ and $f_2$ of $\FF{M}$. A corner corresponding to an incidence between a vertex and a face, the root corner of $\FF{M}^*$ is chosen to be the same vertex-face incidence taken as root for $\FF{M}$ (interchanging the roles of face and vertex).
	\begin{figure}[H]
		\centering
		\includegraphics[page = 19, scale=.8]{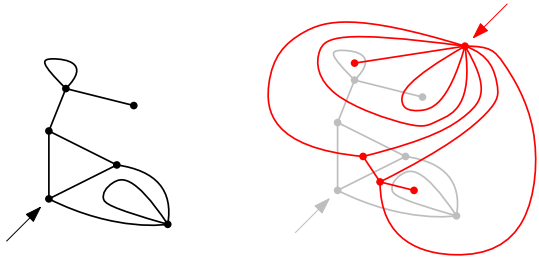}
		\caption{A rooted planar map in black then gray with its dual map drawn over it in red}
		\label{dualmap}
	\end{figure}
\end{definition}

Let us remark that because it is embedded on the plane, a planar map defines a cyclic ordering of the edges and the corners around each vertex and the same is true around each face. Hence the edge following a corner in counterclockwise direction around a vertex is well-defined as well as the corner following that edge around the same vertex. We will call them respectively the vertex-following edge and the vertex-following corner of a corner. We define similarly the face-following edge and the face-following corner of a given corner, but this time in a clockwise direction around the face.

In the following sections, we will endow rooted planar maps with a particular spanning tree called the \emph{rightmost depth-first search spanning tree} and we will denote it for short as the rightmost DFS spanning tree. For a rooted planar map $\FF{M}$, it is the following spanning tree $T$ obtained by an exploration of the corners of $\FF{M}$ using the following procedure :
\begin{itemize}
	\item Initialization : Set the tree $T$ as the tree containing the root vertex of $\FF{M}$ and no edge, and set the active corner to be the root corner. Set also the set of visited edges to be empty.
	\item Core : Consider the active corner $c$ : it is incident to a unique vertex $v$ and to a unique face $f$, and we denote by $e$ the vertex-following edge of $c$. There are 4 possible cases :
	\begin{itemize}
		\item if $e$ is not a visited edge and the face-following corner $c'$ of $c$ is incident to a vertex that does not belong to $T$, then add the edge $e$ to $T$ and to the set of visited edges and set the active corner to be $c'$ ;
		\item if $e$ is not a visited edge and the face-following corner of $c$ is incident to a vertex that belongs to $T$, then add the edge $e$ to the set of visited edges and set the active corner to be the vertex-following corner of $c$ ;
		\item if $e$ is a visited edge and $e$ is an edge of $T$, then set the active corner to be the face-following corner of $c$ ;
		\item if $e$ is a visited edge and $e$ is not an edge of $T$, then set the active corner to be the vertex-following corner of $c$.
	\end{itemize}
	Repeat until the active corner is the root corner again.
	\item End : Return $T$
\end{itemize} 

\begin{figure}[H]
	\centering
	\includegraphics[page = 22, scale=1]{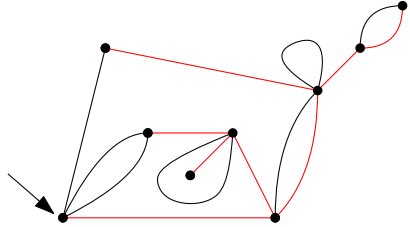}
	\caption{The rightmost depth-first search spanning tree of a rooted planar map}
	\label{RDFS-tree}
\end{figure}

Let us prove that $T$ is indeed a spanning tree of the map $\FF{M}$. First, $T$ is a tree, because every edge inserted in it during the procedure leads to a vertex that was not visited before, hence creating no cycle. Also, every corner around each vertex of $T$ is explored, and every edge is visited two times during the procedure : it is true at the leaves of $T$ and for the edges leading to it, so it is also true for vertices of $T$ having only leaves children, and so on up to the root. This property coupled with the fact that $\FF{M}$ is connected ensures that $T$ is spanning. A consequence is that all corners of the map are encountered during the exploration.

\subsection{Mullin's encoding and our result}

We now recall an encoding of tree-rooted planar maps due to Lehman \cite{Le70}, see also~\cite{walsh}, as shuffles of two Dyck words, or parenthesis-integer systems in their notation. To be precise about the origin of the encoding, Mullin was the first to notice that tree-rooted planar maps could be encoded as intertwined Catalan structures in his article \cite{Mullin} about the enumeration of tree-rooted planar maps. Our presentation of the result follows closely the approach of Schaeffer in a book chapter about planar maps \cite{chapterMaps}, and we will adopt the same denomination of Mullin's encoding.\\

Let $\FF{M}$ be a rooted planar map, that we endow with a spanning tree $T$ which we consider rooted at the same corner as $\FF{M}$. The couple $(\FF{M},T)$ is called a \emph{tree-rooted planar map}. The \emph{dual} of a tree-rooted planar map $(\FF{M},T)$ is the tree-rooted planar map $(\FF{M}^*,T^*)$, with $\FF{M}^*$ being the dual map of $M$ and $T^*$ the spanning tree of $\FF{M}^*$ formed by the edges dual of the ones in $\FF{M}$ that do not belong to $T$. We create the \emph{blossoming tree} associated to the couple $(\FF{M},T)$ by performing the \emph{counterclockwise tour} of $\FF{M}$ around $T$ : we start from the root corner of $\FF{M}$ then visit all the corners of $\FF{M}$ by traveling on the border of the tree $T$ in counterclockwise direction until we get back to the root corner. Each time we encounter an edge that does not belong to $T$ we put an opening or closing stem and we continue traveling around the stem: when we encounter such an edge for the first time, we put an opening stem on the considered vertex along the considered edge, otherwise we put a closing stem in the same manner. We then forget the edges not belonging to $T$ to get the blossoming tree associated to $(\FF{M},T)$ : it is the rooted plane tree $T$ decorated on its vertices with opening and closing stems. We define then the \emph{counterclockwise code} $\Gamma(\FF{M},T)$ of the tree-rooted planar map $(\FF{M},T)$ as the word in $\{E,N,W,S\}^*$ obtained via the following procedure : perform the counterclockwise tour of the associated blossoming tree starting from the root corner, then encode the first visit of an edge in $T$ by $E$, the second visit of an edge in $T$ by $W$, an opening stem by $N$ and a closing stem by $S$, and stop when returning at the root corner. Such a word in $\{E,N,W,S\}^*$ can be seen as a walk on the square lattice $\mathbb{Z}^2$ by sending each letter to its corresponding unit cardinal step. Let us remark that it is also to be seen as a shuffle of two Dyck words, one on the letters $\{E,W\}$, the other on the letters $\{N,S\}$. The \emph{dual} of a word in $\{E,N,W,S\}^*$ is obtained by reversing the word and replacing an occurrence of $E$ (resp. $N$, $W$, $S$) by an occurrence of $S$ (resp. $W$, $N$, $E$) : it corresponds to reversing the timeline of the corresponding walk on $\mathbb{Z}^2$ while symmetrizing it with respect to the line $y=x$.

\begin{figure}[H]
	\centering
	\includegraphics[page=11,scale=0.7]{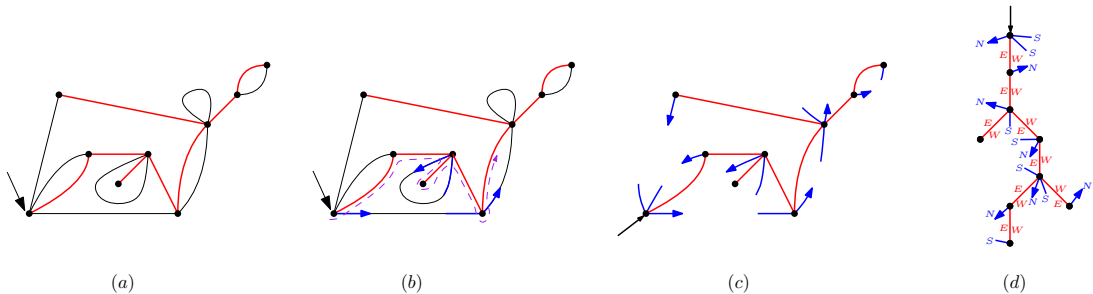}
	\caption{$(a)$ A tree-rooted planar map ; $(b)$ the beginning of the counterclockwise tour ; $(c)$ the associated blossoming tree ; $(d)$ the blossoming tree with the letters of the counterclockwise word.}
	\label{fig:mullin}
\end{figure}

In~\cite{walsh} Walsh and Lehman extend to higher genus Mullin's decomposition of tree-rooted maps and explicitely describe it in terms of shuffles of Dyck words.
 While their description of his bijection is made in a more word theoretic fashion, we state an equivalent formulation of his result here, and we refer to the review of Schaeffer in \cite{chapterMaps} (subsection 1.2.1) for a more in-depth treatment of the subject.

\begin{thm}(Mullin~\cite{Mullin})
	The counterclockwise code is a bijection between tree-rooted planar maps with $n$ edges and excursions of length $2n$, starting from the origin and confined to the first quadrant. This bijection also preserves duality, which means that the following diagram commutes :
	\[\begin{tikzcd}[arrows={|->},row sep=30pt, column sep=80pt]
	(\FF{M},T) \arrow{r}{\text{map duality}} \arrow[swap]{d}{\Gamma} & (\FF{M}^*,T^*) \arrow{d}{\Gamma} \\
	\Gamma(\FF{M},T) \arrow{r}{\text{excursion duality}} & \Gamma(\FF{M}^*,T^*)
	\end{tikzcd}
	\]
	
	Concerning statistics, a tree-rooted planar map $(M,T)$ having $i$ vertices, $j$ faces has a counterclockwise code with $i$ $E$ steps and $j$ $N$ steps.
\end{thm}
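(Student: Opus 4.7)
My plan is to prove Mullin's theorem by constructing both the forward map $\Gamma$ and an explicit inverse, then checking they are mutual inverses, and finally verifying the duality and statistics claims.

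First, I would check that for every tree-rooted planar map $(\FF{M},T)$ with $n$ edges, the word $\Gamma(\FF{M},T)$ is a quadrant excursion of length $2n$. The length is immediate, since each of the $|V|-1$ tree edges is visited twice by the counterclockwise border tour of $T$ (contributing one $E$ and one $W$), and each of the $|F|-1$ non-tree edges contributes exactly one opening stem $N$ and one closing stem $S$, giving total length $2(|V|-1)+2(|F|-1)=2n$ by Euler's relation. For the quadrant confinement, the key point is that along the counterclockwise border tour the second visit to any tree edge always occurs after its first visit, so $|w|_E \geq |w|_W$ on every prefix; similarly, each non-tree edge is visited from the side of its opening stem strictly before that of its closing stem, giving $|w|_N \geq |w|_S$. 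This also yields the statistics: the number of $E$'s (resp. $N$'s) is exactly $|V|-1$ (resp. $|F|-1$), matching the claim under the paper's index convention.

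Next, I would construct an explicit inverse $\Psi$. Given a quadrant excursion $w$ of length $2n$, scan the letters of $w$ while maintaining a running corner in a growing blossoming tree: an $E$ creates a new child of the current vertex and moves to the new corner, a $W$ moves back to the parent corner, an $N$ plants an opening stem at the current corner, and an $S$ plants a closing stem. The quadrant condition ensures that the algorithm is well defined at every step, and returning to the origin guarantees that $T$ is complete. The projections of $w$ onto $\{E,W\}$ and onto $\{N,S\}$ are Dyck words, so the stems match up in a unique planar way, and connecting each matched pair by a non-tree edge drawn in the complement of $T$ produces a rooted planar map $\FF{M}$ whose spanning tree is $T$. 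A straightforward induction on $n$, peeling off the last letter of $w$ (equivalently, removing an outermost matched stem pair or an extremal leaf of $T$), shows that $\Gamma \circ \Psi$ and $\Psi \circ \Gamma$ are the identity.

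For duality, the central structural observation is that the counterclockwise border tour of $T$ in $\FF{M}$, read in reverse, can be identified with a counterclockwise border tour of $T^*$ in $\FF{M}^*$ started at the dual root corner. Under this identification tree edges of $\FF{M}$ become non-tree edges of $\FF{M}^*$ and vice versa, and first/second visits of a tree edge of $\FF{M}$ swap with closing/opening stems of the corresponding dual non-tree edge. This is precisely the content of the substitution rule $E\leftrightarrow S$, $N\leftrightarrow W$ combined with word reversal, i.e.\ the definition of excursion duality, so the diagram commutes.

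The main obstacle is the duality step: making the identification between the tours of $(\FF{M},T)$ and $(\FF{M}^*,T^*)$ precise requires a careful matching of corners on both sides, together with a bookkeeping argument showing that reversing the direction of traversal exchanges the two visits of a tree edge (thus $E \leftrightarrow W$ after reversal) and also swaps the role of opening versus closing stems ($N \leftrightarrow S$), while the primal/dual edge correspondence then turns these pairs into one another. The rest of the argument is an induction or a direct parenthesis-matching argument on the excursion, which can be presented quite cleanly.
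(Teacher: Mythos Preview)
The paper does not actually prove this theorem: it is stated as Mullin's result, and the reader is explicitly referred to Schaeffer's survey \cite{chapterMaps} for a detailed treatment. So there is no ``paper's own proof'' to compare against.

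That said, your proposal is a correct outline of the standard proof. The length and quadrant-confinement arguments are fine, and your explicit inverse $\Psi$ (build the plane tree from the $\{E,W\}$ subword, plant stems at the positions dictated by the $\{N,S\}$ letters, then close up matched stem pairs into non-tree edges) is exactly the usual reconstruction. One point you pass over a bit quickly is why the closure of matched stems yields a \emph{planar} map: this is because the $\{N,S\}$ subword is a Dyck word, so the stem matching is non-crossing along the contour of $T$, which is precisely what allows the non-tree edges to be drawn in the complement of $T$ without intersections. Your duality argument is also the standard one; the careful corner-matching you flag as the ``main obstacle'' is indeed where the work lies, but your description of what has to be checked is accurate.

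A small remark on the statistics: as you noticed, the theorem as stated in the paper reads ``$i$ vertices $\Rightarrow$ $i$ $E$ steps'', which is off by one relative to the actual count ($i$ vertices give $i-1$ tree edges, hence $i-1$ letters $E$). Your parenthetical about ``the paper's index convention'' is the right way to reconcile this with how $\mathcal{M}_{i,j}$ is defined elsewhere in the paper.
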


We organize the next sections as follows. In Section~\ref{FFvsNSPM} we give a bijection between fighting fish and nonseparable planar maps. The bijection is based on a new decomposition of fighting fish that is isomorphic to the classical one of nonseparable planar maps. In Section~\ref{GFFvsPM} we introduce the class of generalized fighting fish, counted by the same numbers as rooted planar maps, and we also give a decomposition isomorphic to the classical one of rooted planar maps and a direct bijection. The very enlightening result is that, for both the bijections involving nonseparable planar maps and rooted planar maps, the corresponding fighting fish through the bijections is a particular Mullin encoding of the original map, the one associated with the rightmost depth-first search spanning tree.

\section{A bijection between fighting fish and nonseparable planar maps}\label{FFvsNSPM}

 The purpose of this section is to present a bijection between fighting fish and nonseparable planar maps, already announced in~\cite{fpsac22}.

\subsection{A decomposition of fighting fish}\label{DecF}

 The  \emph{jaw} $\jaw(\FF{F})$ is, starting from the nose in counterclockwise order,  the length of the first sequence of left lower free edges.

We now present a decomposition of fighting fish that has been introduced in a companion paper exploring the connection between fighting fish and synchronized Tamari intervals~\cite{EFF}. Let us present the two operations involved in this decomposition :

\begin{itemize}
	\item the \emph{concatenation} $\FF{F}_1 \odot \FF{F}_2$ (see Figure \ref{fig:fhconc}) of two fighting fish $\FF{F}_1$ and $\FF{F}_2$ is obtained by gluing the right lower free edge ending the jaw of $\FF{F}_1$ to the leftmost upper free edge of $\FF{F}_2$.
	
	\begin{figure}[H]
		\centering
		\includegraphics[page=6,scale=0.6]{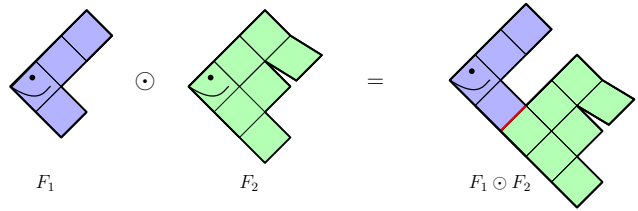}
		\caption{Concatenation of fighting fish}
		\label{fig:fhconc}
	\end{figure}
	
	\item the \emph{$i$-augmentation} $\maltese_i(\FF{F})$ of a fighting fish $\FF{F}$ (see Figure \ref{fig:fhaug}), for $i$ being an integer between 1 and $\jaw(\FF{F})$, is obtained in the following way : we glue the left lower free edge of each one of the first $i$ cells of the jaw of $\FF{F}$ to a new cell and then glue the right lower edge and the left upper edge of all pairs of adjacent new cells.
\end{itemize}

\begin{figure}[H]
	\centering
	\includegraphics[page=7,scale=0.75]{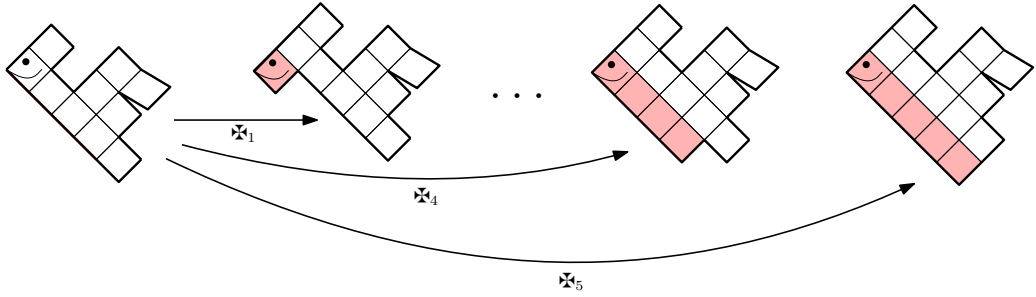}
	\caption{Some augmentations of a fighting fish}
	\label{fig:fhaug}
\end{figure}

\begin{prop}
	For every fighting fish $\FF{F}$, $\FF{F}_1$ and $\FF{F}_2$, and for every $1 \leq i \leq \jaw(\FF{F})$, we have :
	\begin{align*}
		\size(\maltese_i(\FF{F})) &= \size(\FF{F})+1\\
		\jaw(\maltese_i(\FF{F})) &= i\\
		\size(\FF{F}_1 \odot \FF{F}_2) &= \size(\FF{F}_1)+\size(\FF{F}_2)-1\\
		\jaw(\FF{F}_1 \odot \FF{F}_2) &= \jaw(\FF{F}_1) + \jaw(\FF{F}_2)
	\end{align*}
\end{prop}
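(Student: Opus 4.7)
The plan is to verify each identity by directly tracking the free edges of the input fighting fish under the two operations; size is half the total count of free edges, and jaw is read off the initial counterclockwise walk from the nose.

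For the $i$-augmentation $\maltese_i(\FF{F})$, I would first compute the change in total free edges. Each of the $i$ new cells has $4$ edges; the $i$ right upper edges are identified with the first $i$ left lower edges of the jaw of $\FF{F}$, and $2(i-1)$ further edges disappear via the pairwise gluings between consecutive new cells, while the $i$ matched left lower edges of $\FF{F}$ also cease to be free. Tallying up gives a net gain of exactly $2$ free edges, so $\size$ increases by $1$. For the new jaw, the head of $\maltese_i(\FF{F})$ is the leftmost new cell, so the nose shifts to its left corner; starting from there and following the boundary counterclockwise, the gluings force the walk to meet in succession the $i$ left lower edges of the new cells (all free) and then the free right lower edge of the last new cell, which terminates the run. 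This yields $\jaw(\maltese_i(\FF{F}))=i$.

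For the concatenation $\FF{F}_1\odot\FF{F}_2$, only one new pair of edges is glued: the right lower edge ending the jaw of $\FF{F}_1$ and the leftmost upper free edge of $\FF{F}_2$, which is the left upper of the head of $\FF{F}_2$ (always free, since no growth rule ever touches it). Two free edges are absorbed, yielding the claimed size identity. The nose of $\FF{F}_1\odot\FF{F}_2$ is still the nose of $\FF{F}_1$, so the counterclockwise boundary tour starts with the $\jaw(\FF{F}_1)$ left lower edges of the jaw of $\FF{F}_1$. The gluing identifies the bottom corner of the last jaw cell of $\FF{F}_1$ with the left corner of the head of $\FF{F}_2$, i.e.\ the nose of $\FF{F}_2$; the tour then enters $\FF{F}_2$ exactly at its nose and continues through the $\jaw(\FF{F}_2)$ left lower edges of the jaw of $\FF{F}_2$ before meeting a right lower edge. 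Hence the jaws add.

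The main obstacle is pedantic rather than conceptual: one must match up the correct pair of corners at each gluing so that the counterclockwise tour is unambiguous, and verify that the nose is preserved by concatenation but migrates to the leftmost new cell under augmentation. Once those identifications are in place, all four identities reduce to a quick count.
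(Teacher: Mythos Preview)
Your verification is correct. The paper states this proposition without proof, treating the four identities as immediate from the definitions of $\maltese_i$ and $\odot$; your edge count and boundary-tour analysis is exactly the routine check one would supply. One small terminological point: rather than asserting that ``the head of $\maltese_i(\FF{F})$ is the leftmost new cell'' (the head is tied to a choice of construction sequence), it is cleaner to say that the nose---the unique boundary vertex where a free left upper edge meets a free left lower edge, equivalently the start of the boundary word---moves to the left corner of $b_1$; your subsequent reasoning already uses only this.
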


\begin{figure}[H]
	\centering
	\includegraphics[scale=0.75]{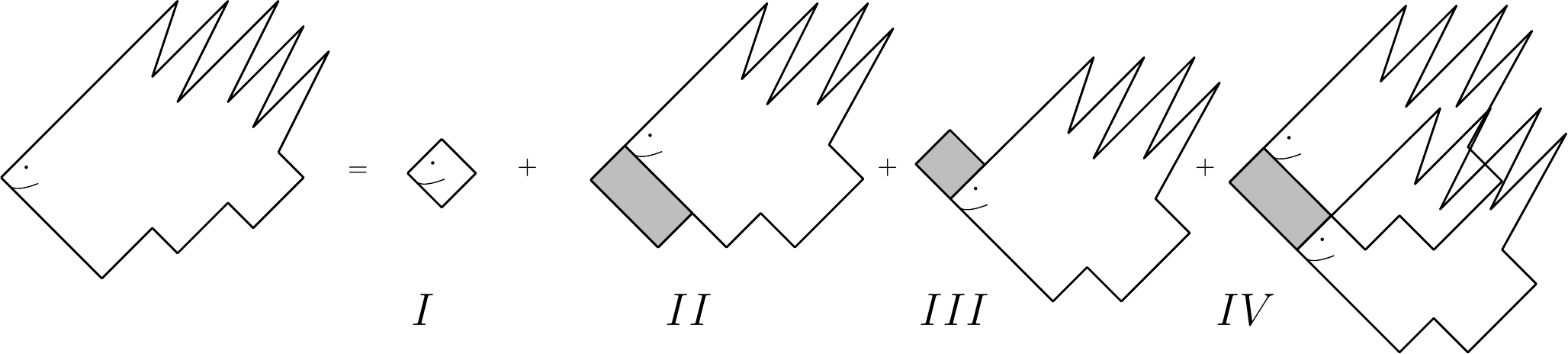}
	\caption{A decomposition on fighting fish}
	\label{fig:DecFish}
\end{figure}

Consider now a fighting fish $\FF{F}$. Then exactly one of the following cases occur (see Figure~\ref{fig:DecFish}) :
\begin{itemize}
	\item Case I : $\FF{F} = ENWS$ is the Head (only one empty component).
	\item Case II : $\FF{F} = \maltese_i(\FF{F}_1)$ for some fighting fish $\FF{F}_1$ and some $1 \leq i \leq \jaw(\FF{F}_1)$ (only one non-empty component).
	\item Case III : $\FF{F} = ENWS \odot \FF{F}_2$ for some fighting fish $\FF{F}_2$ (more than two components, the last one being empty).
	\item Case IV : $\FF{F} = \maltese_i(\FF{F}_1) \odot \FF{F}_2$ for some fighting fish $\FF{F}_1$ and $\FF{F}_2$ and some $1 \leq i \leq \jaw(\FF{F}_1)$ (more than two components, the last one being non-empty).
\end{itemize}

\subsection{A decomposition of nonseparable planar maps}

The \emph{reduced outer degree} $\out(\FF{M})$ of a nonseparable planar map $\FF{M}$ is the number of non-root corners incident to the root face. Note that we can assign an integer from $1$ to $\out(\FF{M})$ to each non-root corner in the root face according to its position in the counterclockwise tour of the root face. We define now two operations on nonseparable planar maps :

\begin{itemize}
	\item the \emph{concatenation} $\FF{M}_1 \odot \FF{M}_2$ (see Figure \ref{fig:mapconc}) of two nonseparable planar maps $\FF{M}_1$ and $\FF{M}_2$ having respective root edges $e_1 = u_1 \rightarrow v_1$ and $e_2 = u_2 \rightarrow v_2$ is obtained by identifying $v_2$ and $u_1$, then deleting the root edge of $\FF{M}_1$ and $\FF{M}_2$ and creating a new root edge from $u_2$ to $v_1$.
	
	\begin{figure}[H]
		\centering
		\includegraphics[page=9,scale=0.8]{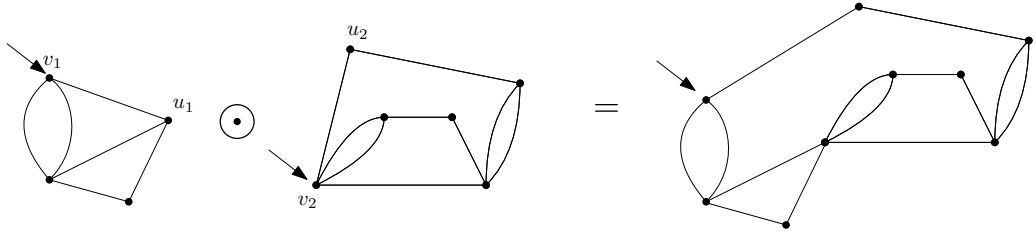}
		\caption{Concatenation of nonseparable planar maps.}
		\label{fig:mapconc}
	\end{figure}
	
	\item the \emph{$i$-augmentation} $\maltese_i(\FF{M})$ of a nonseparable planar map $\FF{M}$ (see Figure \ref{fig:mapaug}), for $i$ being an integer between 1 and $\out(\FF{M})$, is obtained by adding an edge between the $i^{\rm{th}}$ non-root vertex incident to the root face and the root vertex so that the root edge of $\FF{M}$ is no more incident to the root face, and the added edge is the new root edge.
\end{itemize}

\begin{figure}[H]
	\centering
	\includegraphics[page=10,scale=0.75]{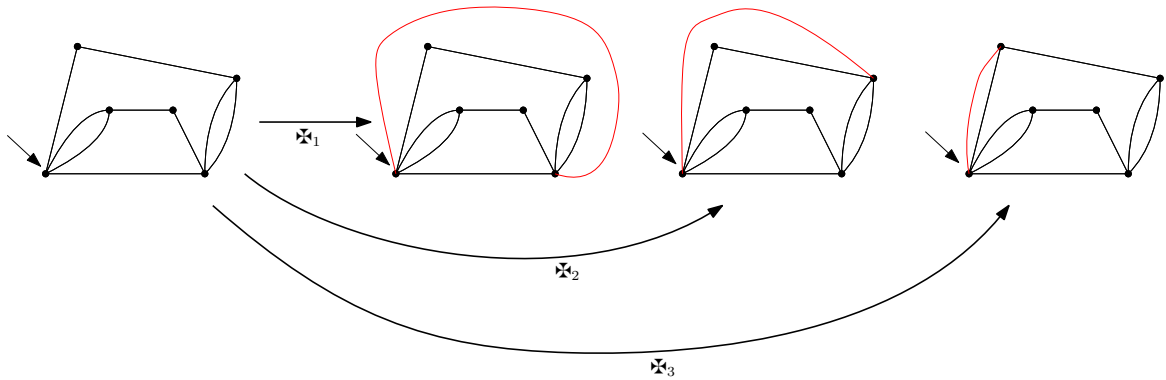}
	\caption{The three possible augmentations of some nonseparable planar map.}
	\label{fig:mapaug}
\end{figure}

\begin{prop}
	For every nonseparable planar maps $\FF{M}$, $\FF{M}_1$ and $\FF{M}_2$, and for every $1 \leq i \leq \out(\FF{M})$, we have :
	\begin{align*}
	\size(\maltese_i(\FF{M})) &= \size(\FF{M})+1\\
	\out(\maltese_i(\FF{M})) &= i\\
	\size(\FF{M}_1 \odot \FF{M}_2) &= \size(\FF{M}_1)+\size(\FF{M}_2)-1\\
	\out(\FF{M}_1 \odot \FF{M}_2) &= \out(\FF{M}_1) + \out(\FF{M}_2)
	\end{align*}
\end{prop}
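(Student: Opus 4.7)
The plan is to verify each of the four identities by direct inspection of the effect of the operations on the edges and corners of the planar maps. None of the four steps is deep; the main delicacy I foresee is the bookkeeping of corners at the identified vertex in the concatenation.

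The two size identities are immediate from the definitions: $\maltese_i(\FF{M})$ inserts exactly one new edge (the new root edge), yielding $\size(\maltese_i(\FF{M})) = \size(\FF{M}) + 1$, while $\FF{M}_1 \odot \FF{M}_2$ deletes the two old root edges and inserts one new root edge, and vertex identification does not affect edge counts, yielding $\size(\FF{M}_1 \odot \FF{M}_2) = \size(\FF{M}_1) + \size(\FF{M}_2) - 1$.

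For $\out(\maltese_i(\FF{M})) = i$, I would label the non-root corners of the root face of $\FF{M}$ counterclockwise as $c_1, \ldots, c_{\out(\FF{M})}$, with $c_j$ incident to a vertex $w_j$, and let $v$ denote the root vertex. Inserting the edge $v w_i$ splits the root face into two faces. The operation designates the inserted edge as the new root edge and chooses the new root face so that the old root edge is no longer incident to it; this face is precisely the one bounded by the new edge together with the arc of the old root face running from $v$ through $c_1, \ldots, c_{i-1}$ to $w_i$. Its non-root corners are therefore $c_1, \ldots, c_{i-1}$ together with the new corner at $w_i$ created by the insertion, giving a total of $i$.

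For $\out(\FF{M}_1 \odot \FF{M}_2) = \out(\FF{M}_1) + \out(\FF{M}_2)$, I set $k_s = \out(\FF{M}_s)$ and label the corners of the root face of $\FF{M}_s$ counterclockwise from the root corner as $C_0^{(s)}, c_1^{(s)}, \ldots, c_{k_s}^{(s)}$, so that $C_0^{(s)}$ lies at $v_s$ and $c_{k_s}^{(s)}$ at $u_s$. The identification $u_1 = v_2 = x$ is performed so as to glue the two outer-face corners $c_{k_1}^{(1)}$ and $C_0^{(2)}$ into a single corner at $x$; this is the convention that produces a well-defined planar map whose outer face is the union of the two former outer faces. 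Tracing the boundary of the new root face counterclockwise from the new root corner at $v_1$, one meets successively $c_1^{(1)}, \ldots, c_{k_1 - 1}^{(1)}$, the merged corner at $x$, and then $c_1^{(2)}, \ldots, c_{k_2}^{(2)}$, before crossing the new root edge $u_2 \to v_1$ back to $v_1$. The count is $(k_1 - 1) + 1 + k_2 = k_1 + k_2$, as required. The only step where one must be careful is this corner-identification at $x$, and once the natural convention is fixed the identities fall out by inspection.
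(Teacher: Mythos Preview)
The paper states this proposition without proof, treating the four identities as immediate consequences of the definitions of $\maltese_i$ and $\odot$; your direct verification by tracing edges and corners is exactly the argument one would supply, and it is correct. The only place requiring care is the corner bookkeeping at the identified vertex in the concatenation, and your count $(k_1-1)+1+k_2$ handles it properly.
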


\begin{figure}[H]
	\centering
	\includegraphics[scale=0.75]{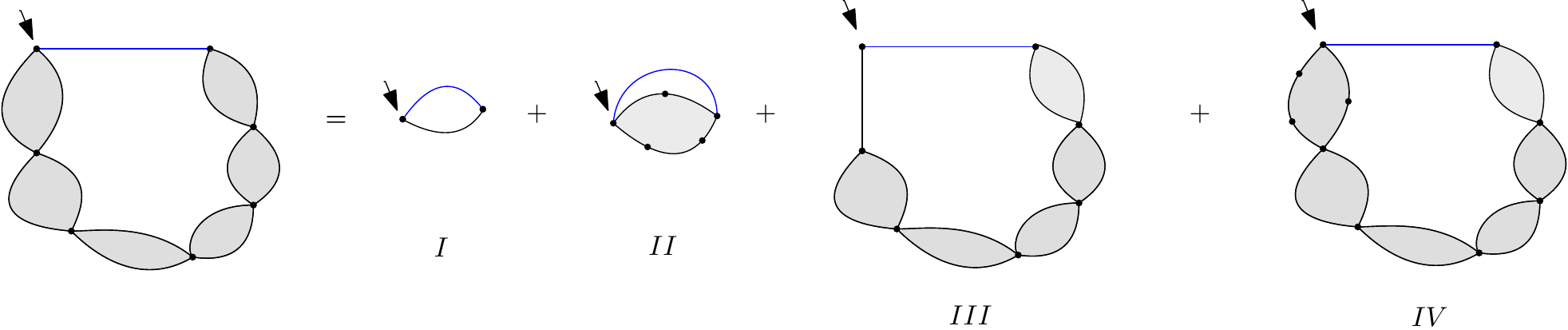}
	\caption{A decomposition on nonseparable planar maps}
	\label{fig:DecCarteNonSep}
\end{figure}

We now recall a classical decomposition of nonseparable planar maps called the \emph{series decomposition} (see Figure~\ref{fig:DecCarteNonSep}). It was first stated by Brown in \cite{Brown}. Consider a nonseparable planar map $\FF{M}$ and delete its root edge to obtain an unrooted planar map $\FF{M}'$ that may have separable vertices. By cutting $\FF{M}'$ at its separable vertices, we obtain a tuple of planar maps that are either nonseparable or the one edge planar map with two distinct vertices (that we call  \emph{Bridge} and note $B$). Then exactly one of the following four cases occurs :
\begin{itemize}
	\item Case I : $\FF{M}$ is the only nonseparable map with two edges (that we will denote by $D$ in the following) ($\FF{M}'$ has only one component and it is a Bridge).
	\item Case II : $\FF{M} = \maltese_i(\FF{M}_1)$ for some nonseparable planar map $\FF{M}_1$ and some $1 \leq i \leq \out(\FF{M}_1)$ ($\FF{M}'$ has only one component and it is not a Bridge).
	\item Case III : $\FF{M} = B \odot \FF{M}_2$ for some nonseparable planar map $\FF{M}_2$ ($\FF{M}'$ has more than two components and the last one is a Bridge).
	\item Case IV : $\FF{M} = \maltese_i(\FF{M}_1) \odot \FF{M}_2$ for some nonseparable planar maps $\FF{M}_1$ and $\FF{M}_2$ and some $1 \leq i \leq \out(\FF{M}_1)$ ($\FF{M}'$ has more than two components and the last one is not a Bridge).
\end{itemize}

\subsection{The bijection}

In the last two subsections, we presented two recursive decompositions for fighting fish and for nonseparable planar maps. Since these two decompositions are isomorphic, we are now able to define a bijection $\Phi$ from nonseparable planar maps to fighting fish. Let $\FF{M}$ be a nonseparable planar map, we examine all possible cases :
\begin{itemize}
	\item Case I : if $\FF{M} = D$, we set $\Phi(\FF{M}) = ENWS$.
	\item Case II : if $\FF{M} = \maltese_i(\FF{M}_1)$ for some nonseparable planar map $\FF{M}_1$ and some $1 \leq i \leq \out(\FF{M}_1)$, we set $\Phi(\FF{M}) = \maltese_i(\Phi(\FF{M}_1))$.
	\item Case III : if $\FF{M} = D \odot \FF{M}_2$ for some nonseparable planar map $\FF{M}_2$, we set $\Phi(\FF{M}) = ENWS \odot \Phi(\FF{M}_2)$.
	\item Case IV : if $\FF{M} = \maltese_i(\FF{M}_1) \odot \FF{M}_2$ for some nonseparable planar maps $\FF{M}_1$ and $\FF{M}_2$ and some $1 \leq i \leq \out(\FF{M}_1)$, we set $\Phi(\FF{M}) = \maltese_i(\Phi(\FF{M}_1)) \odot \Phi(\FF{M}_2)$.
\end{itemize}

\begin{thm}
	$\Phi$ is a bijection between nonseparable planar maps with $n$ edges, $i+1$ vertices, $j+1$ faces and outer degree $k$ and fighting fish of size $n$ with $i$ left lower free edges, $j$ right lower free edges and jaw length $k$. Moreover, for a nonseparable planar map $\FF{M}$, $\Phi(\FF{M})$ is the Mullin encoding of the tree-rooted planar map $(\FF{M},T)$ where $T$ is the rightmost depth-first search spanning tree of $\FF{M}$. As a consequence, $\Phi$ preserves duality.
\end{thm}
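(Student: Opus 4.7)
The plan is to establish the three claims of the theorem in order. For the bijection with statistics, I would argue by induction on size: every nonseparable planar map falls into exactly one of Cases~I--IV in the series decomposition and every fighting fish falls into exactly one of Cases~I--IV in the fish decomposition, and since $\Phi$ is defined case-by-case using the same symbolic operations $\maltese_i$ and $\odot$ on both sides, it is immediately bijective. The statistics are then tracked through the induction using the two propositions of Sections~2.1 and~2.2 together with the straightforward additivity of the numbers of vertices and faces (minus one) under $\maltese_i$ and $\odot$; the base case $D \mapsto ENWS$ is direct since $D$ has two vertices and two faces while $ENWS$ has one left lower and one right lower free edge, with $\out(D) = \jaw(ENWS) = 1$.

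The main content lies in identifying $\Phi(\FF{M})$ with $\Gamma(\FF{M}, T)$, where $T$ is the rightmost DFS spanning tree. My plan is to prove this by induction on size, and the crucial step is a \emph{decomposition lemma for the rightmost DFS tree}: whenever $\FF{M} = \maltese_i(\FF{M}_1) \odot \FF{M}_2$ (or falls in any of the three other cases), the tree of $\FF{M}$ is the union of the rightmost DFS trees of $\FF{M}_1$ and $\FF{M}_2$ together with the new root edge created by the augmentation. This should follow by carefully tracing the DFS procedure: the rightmost priority forces the exploration to first descend along the newly added root edge into $\FF{M}_1$, completely explore it by induction, return to the cut vertex shared with $\FF{M}_2$, and only then enter $\FF{M}_2$. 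Granting this, the Mullin encoding read around $T$ splits into blocks whose letters correspond in the correct order to the blocks produced by $\Phi$: the $\maltese_i$ step contributes $i$ leading letters $E$ followed by the recursive content and a matching $W$, while the $\odot$ step concatenates the two codes in the same way as $\odot$ acts on fighting fish words, which exactly matches the recursive definition of $\Phi$.

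Duality then follows formally. Mullin's theorem already gives $\Gamma(\FF{M}^*, T^*)$ equal to the dual word of $\Gamma(\FF{M}, T)$, so the only remaining ingredient is that the dual of the rightmost DFS spanning tree of $\FF{M}$ is the rightmost DFS spanning tree of $\FF{M}^*$. This self-duality can be shown by a symmetric argument on the DFS procedure itself: the exploration of $\FF{M}$ starting from a given corner traces exactly the same sequence of corner-visits as the exploration of $\FF{M}^*$ starting from the corresponding dual corner, once one swaps the roles of tree-edges and non-tree-edges, and the vertex-following/face-following operations dualize correctly. Combining this with the previous step gives $\Phi(\FF{M}^*) = \Gamma(\FF{M}^*, T^*) = \mathrm{dual}(\Gamma(\FF{M}, T)) = \mathrm{dual}(\Phi(\FF{M}))$.

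The main obstacle will be the decomposition lemma for the rightmost DFS tree in Part~2. Showing that the exploration splits cleanly at the cut vertex of a series decomposition requires a careful case analysis of the active corner when the procedure returns to the root face, and one must verify that the rightmost priority is precisely what enforces visiting $\FF{M}_1$ entirely before $\FF{M}_2$. The self-duality statement for Part~3 is of a similar flavour and should admit a parallel local analysis, but it is also where I expect a nontrivial amount of bookkeeping about corners and their dual counterparts.
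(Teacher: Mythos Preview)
Your overall strategy matches the paper's exactly: establish the bijection from the isomorphic decompositions, then prove $\Phi(\FF{M}) = \Gamma(\FF{M},T)$ by induction through the four cases. However, there is a concrete inversion in your Part~2. You assert that in $\maltese_i(\FF{M}_1)$ the newly added root edge becomes a tree edge of the rightmost DFS and therefore contributes an $E$ and a matching $W$ to the code. The opposite is true: that edge has the root vertex as one endpoint, which is already visited when the DFS reaches the edge from its other side, so it is a \emph{non-tree} edge and contributes an opening/closing stem pair, i.e.\ letters $N$ and $S$. Concretely, writing $\Phi(\FF{M}_1)=E^{k_1}\FF{G}$ with $k_1=\out(\FF{M}_1)$, the Mullin code of $\maltese_i(\FF{M}_1)$ with its rightmost DFS tree is $E^i\,N\,E^{k_1-i}\FF{G}\,S$, which is precisely $\maltese_i(\Phi(\FF{M}_1))$ on the fighting-fish side. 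Correspondingly, your ``decomposition lemma'' should read: the rightmost DFS tree of $\maltese_i(\FF{M}_1)$ is $T_1$ itself (no new tree edge), and the tree of a concatenation is obtained by gluing $T_1$ and $T_2$ at the identified vertex. With this correction the case analysis goes through exactly as in the paper.

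On duality you are in fact more careful than the paper, which simply declares it ``as a consequence'' of the Mullin identification. As you correctly observe, this requires the additional fact that the rightmost DFS tree is self-dual, $(T_{\FF{M}})^* = T_{\FF{M}^*}$; your proposed corner-by-corner comparison of the two explorations is the right way to fill that gap.
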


\begin{figure}[H]
	\centering
	\includegraphics[page = 20, scale=0.7]{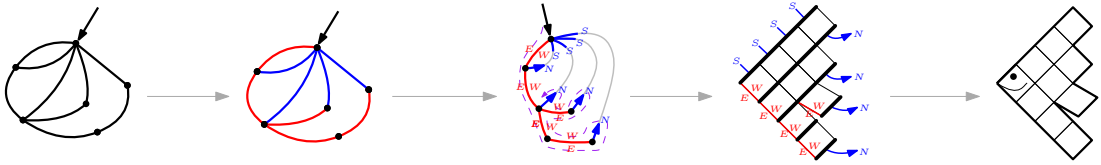}
	\caption{The bijection $\Phi$ applied to some nonseparable planar map}
	\label{fig:bijnonsep}
\end{figure}

\begin{proof}
	Since the two recursive decompositions are isomorphic, $\Phi$ is a well-defined bijection between nonseparable planar maps with $n$ edges and out degree $k$ and fighting fish of size $n$ and jaw length $k$. We proceed by induction on the size of nonseparable planar maps to prove that $\Phi$ maps nonseparable planar maps to their rightmost depth-first search Mullin encoding. Let $\FF{M}$ be a nonseparable planar map. We distinguish between the four possible cases :
	\begin{itemize}
		\item Case I : if $\FF{M} = D$, then $\Phi(\FF{M}) = ENWS$ is indeed the Mullin encoding of $(\FF{M},T)$ where $T$ is the rightmost DFS tree of $\FF{M}$.
		\item Case II : if $\FF{M} = \maltese_i(\FF{M}_1)$ for some nonseparable planar map $\FF{M}_1$ and some $1 \leq i \leq \out(\FF{M}_1)$, then by the induction hypothesis, $\Phi(\FF{M}_1)$ is the Mullin encoding of $\FF{M}_1$ endowed with its rightmost DFS spanning tree $T_1$. Then the rightmost DFS spanning tree of $\FF{M}$ is also $T_1$ since $\FF{M}$ is obtained by adding an edge towards the root, that is an already visited vertex. Hence, if we write $\Phi(\FF{M}_1) = E^{k_1} \FF{G}$ with $k_1 = \out(\FF{M}_1)$, the Mullin encoding of $(\FF{M},T_1)$ is  $E^i N E^{k_1-i} \FF{G} S$, that is exactly $\maltese_i(\Phi(\FF{M}_1)) = \Phi(\FF{M})$.
		\item Case III : if $\FF{M} = D \odot \FF{M}_2$ for some nonseparable planar map $\FF{M}_2$, then by the induction hypothesis, $\Phi(\FF{M}_2)$ is the Mullin encoding of $\FF{M}_2$ endowed with its rightmost DFS spanning tree $T_2$. The rightmost DFS spanning tree $T$ of $\FF{M}$ consists of a root vertx with a pending edge on which $T_2$ is attached. Writing $\Phi(\FF{M}_2) = \FF{G} S$, the Mullin encoding of $(\FF{M},T)$ is then $E\FF{G} W S$, that is exactly $ENWS \odot \Phi(\FF{M}_2) = \Phi(\FF{M})$.
		\item Case IV : if $\FF{M} = \maltese_i(\FF{M}_1) \odot \FF{M}_2$ for some nonseparable planar maps $\FF{M}_1$ and $\FF{M}_2$ and some $1 \leq i \leq \out(\FF{M}_1)$, then by the induction hypothesis, $\Phi(\FF{M}_1)$ and $\Phi(\FF{M}_2)$ are the respective Mullin encodings of $\FF{M}_1$ and $\FF{M}_2$ endowed with their respective rightmost DFS spanning tree $T_1$ and $T_2$. Then the rightmost DFS spanning tree $T$ of $\FF{M}$ is obtained by gluing the root of $T_2$ onto the $k$-th corner of the righmost branch of $T_1$, with $k+ \out(T2)= \out(T_1)$. Let us write $\Phi(\FF{M}_1) = E^{k} \FF{G}_1$ and $\Phi(\FF{M}_2) = \FF{G}_2 S$. Performing the counterclockwise tour of $\FF{M}$ around $T$, we get the Mullin encoding of $(\FF{M},T)$ $E^{k} \FF{G}_2 \FF{G}_1 S$, which is exactly $\maltese_i(\Phi(\FF{M}_1)) \odot \Phi(\FF{M}_2) = \Phi(\FF{M})$.
	\end{itemize}
\end{proof}

\subsection{Enumeration results}

For the sake of completeness, we recall now some enumeration results, which have been known for a long time for nonseparable planar maps (see \cite{browntutte} for example). Concerning fighting fish, the counting has been made analytically in \cite{ff2}, and the preceeding bijection makes it more enlightening combinatorially.

\begin{thm}
	The number of rooted nonseparable planar maps having $n +1$ edges, or equivalently the number of fighting fish of size $n+1$ is :
	\[\frac{2(3n)!}{(n+1)!(2n+1)!}\]
	The number of rooted nonseparable planar maps having $i+1$ vertices and $j+1$ faces, or equivalently the number of fighting fish with $i+1$ $E$ steps and $j+1$ $N$ steps is :
	\[\frac{(2i+j-2)!(2j+i-2)!}{i!j!(2i-1)!(2j-1)!}\]
\end{thm}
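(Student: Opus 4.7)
The plan is to reduce both enumerations to the classical counts of nonseparable rooted planar maps via the bijection $\Phi$ constructed in Section~2.3. Since $\Phi$ is a bijection that matches size with number of edges, $E$-count with $i$ (vertices minus one), $N$-count with $j$ (faces minus one), and jaw length with reduced outer degree, both identities for fighting fish follow directly from the corresponding identities for nonseparable maps.

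To enumerate nonseparable maps, I would follow Brown's approach \cite{browntutte}. Introduce the catalytic generating function $M(x,u)=\sum_{\FF{M}\in\mathcal{NS}} x^{\size(\FF{M})} u^{\out(\FF{M})}$ and translate the series decomposition of Section~2.2 into a polynomial equation with one catalytic variable $u$. Cases~I and~III contribute base and linear terms respectively, while the $i$-augmentation appearing in Cases~II and~IV produces the classical divided difference $\frac{uM(x,u)-M(x,1)}{u-1}$, which encodes the summation $\sum_{i=1}^{\out(\FF{M}_1)} u^i$. Applying the kernel method (equivalently Brown's quadratic method) solves the resulting equation in closed form, and Lagrange inversion extracts
\[ [x^{n+1}]\,M(x,1) \;=\; \frac{2(3n)!}{(n+1)!\,(2n+1)!}. \]

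For the bivariate refinement, I would rerun the same decomposition with independent weights $y$ and $z$ tracking respectively the non-root vertices ($E$ letters on the fish side) and non-root faces ($N$ letters). The refined equation is symmetric under $y\leftrightarrow z$, matching the fact that $\Phi$ preserves duality (which swaps vertices with faces in maps and $E\leftrightarrow S$, $N\leftrightarrow W$ in fish, cf.\ the paper's introductory section on Mullin's encoding). The same kernel method and Lagrange inversion then extract the symmetric closed form $\frac{(2i+j-2)!(2j+i-2)!}{i!\,j!\,(2i-1)!\,(2j-1)!}$.

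The main obstacle is solving the bivariate functional equation explicitly: the univariate Brown--Tutte computation is essentially textbook, but propagating two statistics through the catalytic kernel requires more delicate bookkeeping. Since both closed forms are classical---established for nonseparable maps in \cite{browntutte} and analytically for fighting fish in \cite{ff2}---the most efficient route is to cite these enumerative results and verify that the statistic correspondence given by $\Phi$ matches them on the fighting-fish side, which is exactly the content of the bijection theorem proved in the preceding subsection.
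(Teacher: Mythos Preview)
Your proposal is correct and, in fact, more detailed than what the paper itself provides: the paper gives no proof of this theorem at all. It merely states the two formulas as known results, citing \cite{browntutte} for nonseparable maps and \cite{ff2} for fighting fish, and observes that the bijection $\Phi$ from the preceding subsection transports one count to the other. Your final paragraph---cite the classical enumerations and invoke the statistic-preserving bijection $\Phi$---is exactly the paper's ``proof''; the earlier paragraphs sketching Brown's catalytic-variable computation and the bivariate kernel argument go beyond what the paper does, but are a reasonable outline of how those cited results are obtained.
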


\section{Generalized fighting fish}\label{GFFvsPM}

Let us now present an alternative definition of fighting fish in terms that will enable us to generalize it. 
\begin{prop}
Fighting fish are exactly finite sets of cells that can be built starting from the head, then attaching new cells using one of the following two operations :
\begin{itemize}
	\item \emph{Upper strip gluing} : Let $a_1, a_2, ..., a_k$, $k \geq 1$ be cells such that $a_i$ has its right lower edge glued to the left upper edge of $a_{i+1}$ for all $1 \leq i \leq k-1$, and such that all $a_i$ have their right upper edge free ; for every $i$, we glue the right upper edge of $a_i$ to the left lower edge of a new cell $b_i$ while gluing the right lower edge of $b_i$ to the left upper edge of $b_{i+1}$ for $1 \leq i \leq k-1$.
	\item \emph{Lower strip gluing} : Let $a_1, a_2, ..., a_k$, $k \geq 1$ be cells such that $a_i$ has its right upper edge glued to the left lower edge of $a_{i+1}$ for all $1 \leq i \leq k-1$, and such that all $a_i$ have their right lower edge free ; for every $i$, we glue the right lower edge of $a_i$ to the left upper edge of a new cell $b_i$ while gluing the right upper edge of $b_i$ to the left lower edge of $b_{i+1}$ for $1 \leq i \leq k-1$.
\end{itemize}
\end{prop}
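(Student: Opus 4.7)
The proposition is an equivalence, and I plan to prove the two inclusions separately.

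For the direction that strip gluings produce fighting fish, I will realize each strip gluing of size $k$ as a sequence of $k$ original gluings: first, the ordinary single (upper or lower) gluing of $b_1$ onto $a_1$; then, for each $i = 2, \ldots, k$ in turn, the double gluing applied to the triple $(a_{i-1}, b_{i-1}, a_i)$, which by direct inspection satisfies the hypotheses of the double gluing rule and produces a new cell that is exactly $b_i$ with the required adjacencies. On the word side, this is the decomposition of the rewrite $W^k \to NW^kS$ as a single application of $W \to NWS$ followed by $k-1$ applications of $WN \to NW$, and the lower strip case is fully symmetric.

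For the converse direction, I proceed by induction on the number of cells, the base case (the head alone) being immediate. The inductive step rests on a key lemma: every non-head fighting fish $\FF{F}$ admits a \emph{terminal strip}, i.e. a maximal chain of cells $b_1, \ldots, b_k$ linked by $b_i$'s right lower to $b_{i+1}$'s left upper gluings, with each $b_i$'s left lower edge glued to some cell $a_i$ of an underlying chain, and such that the edges left free by the corresponding strip gluing ($b_1$'s left upper, $b_k$'s right lower, and each $b_i$'s right upper) are indeed free in $\FF{F}$; symmetrically for lower strips. Removing $\sigma = \{b_1, \ldots, b_k\}$ yields a smaller fighting fish $\FF{F}'$ (any original-gluing construction of $\FF{F}$ can be reordered so the $k$ gluings producing $\sigma$ occur last), to which the inductive hypothesis applies; appending the strip gluing corresponding to $\sigma$ then reconstructs $\FF{F}$.

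The main obstacle, as I see it, is proving the existence of a terminal strip. My plan is to partition the cells of $\FF{F}$ into strips using any valid original construction and consider the partial order where $\sigma_1 \prec \sigma_2$ whenever a cell in the new-cell chain of $\sigma_1$ appears as an $a$-cell in the construction of $\sigma_2$; this is a finite DAG, and any sink is terminal by definition, since no subsequent strip covered its $b$-cells from above. To produce a sink concretely, I would start from a cell $x$ whose right upper and right lower edges are both free (such a cell exists by finiteness) and distinguish cases: if $x$ was added by a single gluing, then $\{x\}$ is already a terminal strip of size $1$; if $x$ was added by a double gluing, following left-upper-to-right-lower gluings leftward from $x$ builds a chain $b_1, \ldots, b_k = x$, and the structural form of double gluings supplies the underlying $a$-chain. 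The delicate point, which is the combinatorial heart of the argument, is ensuring that $x$ can be chosen so that every intermediate $b_i$ in the resulting chain also has its right upper edge free; this reduces to choosing $x$ to lie on a sink of the strip DAG, which exists by acyclicity and finiteness. Once this terminal strip lemma is established, the induction closes and the proposition follows.
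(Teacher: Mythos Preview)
Your forward direction matches the paper exactly: an upper (resp.\ lower) strip gluing of length $k$ is realised as one upper (resp.\ lower) gluing followed by $k-1$ double gluings. For the converse, however, your plan has a circularity. You want a sink of a ``strip DAG'' built from a partition of $\FF{F}$ into strips, but possessing such a partition is equivalent to possessing a strip construction, which is precisely the statement being proved. The phrase ``using any valid original construction'' does not break the circle: an original construction adds one cell at a time, and you give no rule for grouping those cells into strips (a doubly-glued cell has two candidate parent chains, and you do not argue that either choice produces a chain sitting over a coherent $a$-chain). Your concrete fallback---tracing left-upper/right-lower adjacencies backward from a cell $x$ with both right edges free---runs straight into the delicate point you yourself flag, which you then defer back to the still-undefined strip DAG.

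The paper avoids all of this by peeling off a \emph{single} cell rather than a whole strip. Let $b$ be the last cell in some original construction of $\FF{F}$; by induction the remaining fish $\FF{F}'$ already has a strip construction. If $b$ came from an upper or lower gluing, it becomes a new strip of length $1$. If $b$ came from a double gluing onto cells $a$ and $a'$, the double-gluing precondition provides a common left neighbour $c$ of $a$ and $a'$. Now if $a$ belonged to a lower strip then $c$ would have to be the preceding $b$-cell of that lower strip, and if $a'$ belonged to an upper strip then $c$ would have to be the preceding $b$-cell of that upper strip; since $c$ is added by exactly one strip in the construction of $\FF{F}'$, these cannot both hold. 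Hence either $a$ lies in an upper strip or $a'$ in a lower strip, and that strip is simply extended by one extra cell to absorb $b$. No global terminal strip ever needs to be located.
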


\begin{proof}
	First, the upper strip gluing of $k$ cells $b_1, ..., b_k$ to $a_1, ..., a_k$ can be obtained by performing an upper gluing of $b_1$ on $a_1$, then performing $k-1$ double gluings of $b_{i+1}$ on $a_{i+1}$ and $b_i$, for $1 \leq i \leq k-1$. Similarly, a lower strip gluing of $k$ cells can be obtained with a lower gluing and $k-1$ double gluings. Hence every finite set of cells that can be obtained starting from the head and performing upper and lower strip gluings is a fighting fish.\\
	We will now prove that every fighting fish can be obtained starting from the head and performing upper and lower strip gluings by induction on the number $m$ of cells in the fighting fish. For $m=1$, the only fighting fish is the head, so the property is verified. Suppose that the property is true for fighting fish having strictly less than $m \geq 2$ cells and let $\mathrm{F}$ be a fighting fish having $m$ cells. If $\mathrm{F}$ is obtained by applying an upper gluing on a fighting fish $\mathrm{F}_1$, then $\mathrm{F}_1$ has $m-1$ cells, so it is constructible using upper and lower strip gluings, and so is $\mathrm{F}$ by performing one additional upper strip gluing of $1$ cell. The case where $\mathrm{F}$ is obtained with a lower gluing on a fighting fish $\mathrm{F}_2$ is similar. If $\mathrm{F}$ is obtained with a double gluing of a cell $b$ on two cells $a$ (on the left upper edge of $b$) and $a'$ (on the left lower edge of $b$) of a fighting fish $\mathrm{F}_3$, then $\mathrm{F}_3$ has $m-1$ cells so it is constructible from the head using upper and lower strip gluings. In this construction, either $a$ is added using an upper strip gluing or $a'$ is added using a lower strip gluing, because in $\mathrm{F}_3$ (and in $\mathrm{F}$), the left lower edge of $a$ and the left upper edge of $a'$ are glued respectively to the right lower edge and to the right upper edge of the same cell $c$. If $a$ is added using an upper strip gluing in $\mathrm{F}_3$, then $\mathrm{F}$ can be obtained using the same strip construction as for $\mathrm{F}_3$, except we perform an upper strip gluing with the extra cell $b$ when we glue the upper strip containing $a$. The symmetrical statement is true if $a'$ is added using a lower strip gluing. Finally, $\mathrm{F}$ is constructible using upper and lower strip gluings and the induction follows.
\end{proof}

If we state this proposition in the word setting, we obtain the following : fighting fish are exactly the finite words on the alphabet $\{E, N, W, S\}$ that can be obtained from the word $ENWS$ using a sequence of the two operations :
\begin{itemize}
	\item Upper strip gluing : replace a subword $W^k$, $k \geq 1$, by $NW^kS$.
	\item Lower strip gluing : replace a subword $N^k$, $k \geq 1$, by $EN^kW$.
\end{itemize}

We can then build a class of walks that includes fighting fish if we allow $k$ to be $0$ in the operations of strip gluing. We will prefer from now on to work in the words or walks setting since strip gluings with $k=0$ do not add cells.

\begin{definition}
The class of \emph{generalized fighting fish}, denoted $\mathcal{GFF}$ is the set of words or walks inductively defined from the empty word $\varepsilon$ with the help of the following two types of operations :
\begin{itemize}
\item \textbf{operation $\bigtriangledown_k$, $k\geq 0$}: replace a subword $N^k$ by $EN^kW$.
\item\textbf{operation $\bigtriangleup_k$, $k\geq 0$}: replace a subword $W^k$ by $NW^kS$.
\end{itemize}
\end{definition}

\begin{figure}[H]
	\centering
	\includegraphics[page=15,width=0.5\textwidth]{figures-these}
	\caption{Operations $\bigtriangledown_k$ and $\bigtriangleup_k$}
	\label{fig:construction}\label{fig:FightingFish}
\end{figure}

As for fighting fish, the \emph{size} of a generalized fighting fish $\mathrm{F}$ is half of its length, and we have :\\
$$\size(\mathrm{F}) = \frac{1}{2} (|h|_E+|h|_N+|h|_W+|h|_S) = |h|_E+|h|_N = |h|_W+|h|_S$$

Remark that the class of fighting fish is given  by those generalized fighting fish that can be obtained from $EW$ and $NS$ using only operations $\bigtriangledown_k$ and $\bigtriangleup_k$, with $k\geq 1$. (See Figure~\ref{fig:FightingFish} for an example)

We denote by ${gf}_n=|\mathcal{GF}_{n}|$  the number of generalized fighting fish with size $n$ and by ${gf}_{i,j}=|\mathcal{GF}_{i,j}|$ the number of generalized fighting fish with $i$ $E$ steps (or $i$ $W$ steps) and $j$ $N$ steps (or $j$ $S$ steps).

Generalized fighting fish are defined inductively using the operator
$\bigtriangleup_k$ and $\bigtriangledown_k$ on words but it is useful
to take advantage of their geometric properties as lattice paths. In
particular the following lemma will be useful, where $P^<$ denote the
set of points that can be reached from a point $P$ by $N$ and $E$
steps (ie the $45$ tilted upper-right quarter plane with origin at
$P$):

\begin{lem}\label{Lem:BorderProp}
  Let $\FF{F}$ be a generalized fighting fish with origin $O$. Then $\FF{F}$ is
  included in $O^<$. Moreover if $\FF{F}=\FF{F}_1\odot \FF{F}_2$ with $P$ the
  endpoint of $\FF{F}_1$ then $\FF{F}_2$ remains in $P^<$ or exits from the
  upper side of $P^<$ (that is with a $W$ step, or with a $S$ step originating from $P$). 
\end{lem}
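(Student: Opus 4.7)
I would prove both statements simultaneously by induction on the number of construction operations needed to obtain $\FF{F}$ from the empty word $\varepsilon$.

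The base case $\FF{F} = \varepsilon$ is immediate: the walk reduces to the single point $O$, which trivially lies in $O^<$, and the only decomposition has $P = O$ with empty $\FF{F}_2$, so the second claim is vacuous.

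For the inductive step, assume $\FF{F}$ is obtained from some generalized fighting fish $\FF{F}'$ by a single application of $\bigtriangledown_k$ or $\bigtriangleup_k$, and that the lemma holds for $\FF{F}'$. For the inclusion in $O^<$, I would observe that $\bigtriangledown_k$ replaces a subword $N^k$ with $EN^kW$, which in terms of the walk inserts an $E$-step before the affected $N^k$ block and a $W$-step after it, so that the points visited in between are exactly the original $k+1$ points translated by $(+1,0)$. Points of $\FF{F}'$ lie in $O^<$ by the induction hypothesis, and a $(+1,0)$-shift of an element of $O^<$ is still in $O^<$, so the new walk remains in $O^<$. The operation $\bigtriangleup_k$ is entirely symmetric, with an $(0,+1)$-shift.

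For the second statement, given a decomposition $\FF{F} = \FF{F}_1 \odot \FF{F}_2$ with $P$ the endpoint of $\FF{F}_1$, I would distinguish three cases according to whether the newly inserted bump lies entirely inside $\FF{F}_1$, entirely inside $\FF{F}_2$, or straddles the boundary between them. In the first two cases the property for $\FF{F}_2$ reduces directly to the induction hypothesis applied to the decomposition of $\FF{F}'$ induced by the same split, combined with the translation analysis used for Part 1. The critical case is the straddling one: here $P$ itself is moved by the $(+1,0)$ (resp.\ $(0,+1)$) translation attached to the interior of the bump, and the new closing step of the bump falling in $\FF{F}_2$ is precisely a $W$-step (for $\bigtriangledown_k$) or an $S$-step originating from the shifted $P$ (for $\bigtriangleup_k$), matching one of the two permitted exit types.

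The main obstacle will be the straddling case. One has to track carefully how the endpoint $P$ is relocated by the translation inside the inserted bump, verify that the portion of $\FF{F}_2$ preceding the closing step remains inside the shifted quadrant $P^<$ (so that this closing step is genuinely the first exit of $\FF{F}_2$), and check that the type of this exit is compatible with the statement.
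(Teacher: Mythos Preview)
Your approach is correct and essentially identical to the paper's: induction on the construction sequence, with your three cases for the split (bump in $\FF{F}_1$, bump in $\FF{F}_2$, bump straddling) exactly mirroring the paper's terse ``the seed either in $\FF{F}'_1$, in $\FF{F}'_2$ or spread between the two.'' One small correction in the straddling case for $\bigtriangleup_k$: if some of the $W$-steps of the seed fall in $\FF{F}_2$, then the very first such $W$-step (not the closing $S$-step) is already the exit from $P^<$, but since $W$-exits are among the permitted exit types this does not affect the argument.
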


\begin{proof}
  The proof is by recurrence: the fish $\FF{F}$ can be obtained from a
  smaller fish $\FF{F}'$ by applying the operation $\bigtriangledown_k$ or
  $\bigtriangleup_k$ to a seed $N^k$ or $W^k$ in $\FF{F}'$. Assume that
  $\FF{F}'$ satisfies the lemma: then it stays in $O^<$ and this property
  is preserved by $\bigtriangleup_k$ or $\bigtriangledown_k$; moreover
  if $\FF{F}=\FF{F}_1\odot \FF{F}_2$ then $\FF{F}'=\FF{F}'_1\odot \FF{F}'_2$ with the seed either in
  $\FF{F}'_1$, in $\FF{F}'_2$ or spread between the two: in each case the
  second property is preserved as well.
  \end{proof}

\section{A bijection between generalized fighting fish and rooted planar maps}\label{sec1}
 
\subsection{A decomposition for generalized fighting fish}
In order to present a decomposition for generalized fighting fish we need to introduce the following operations:
\begin{itemize}
\item the concatenation $\oplus$ of two generalized fighting fish $\FF{F}_1$ and $\FF{F}_2$, that is given by $\FF{F}_1 \oplus \FF{F}_2=\FF{F}_1 E \FF{F}_2 W$ (see Figure~\ref{fig:OpGenFigFishConc}).
\begin{figure}[H]
	\centering
	\includegraphics[scale=0.7]{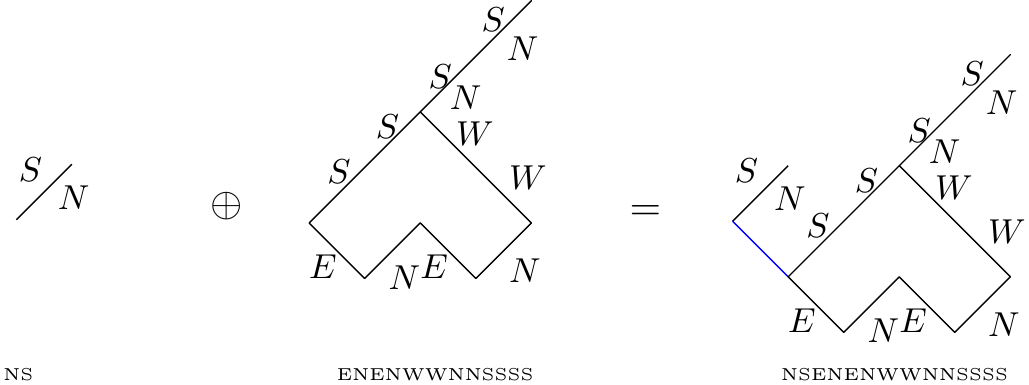}
	\caption{The concatenation of two generalized fighting fish $\FF{F}_1$ and $\FF{F}_2$.}
	\label{fig:OpGenFigFishConc}
\end{figure}
\item let us take a generalized fighting fish $\FF{F}$ and  denote by $\ell$ its number of visits at latitude $0$ minus one, then  the \emph{$i$-augmentation} $\maltese_i(\FF{F})$ of  $\FF{F}$ (see Figure \ref{fig:OpGenFigFish}), for $i$ being an integer between $0$ and $\ell$, is obtained by adding a $N$ step after step $i$ at latitude $0$ and a $S$ step at the end of the walk.
More precisely, let $\FF{G}_i$ the subwalk of $\FF{F}$ starting from $0$ and ending at the $i$-th visit, and let us denote by $\overline{\FF{G}}_i$ the remaining subwalk of  $\FF{F}$. Then $\maltese_i(\FF{F})=\FF{G}_i N \overline{\FF{G}}_i S$.
\begin{figure}[H]
	\centering
	\includegraphics[scale=0.65]{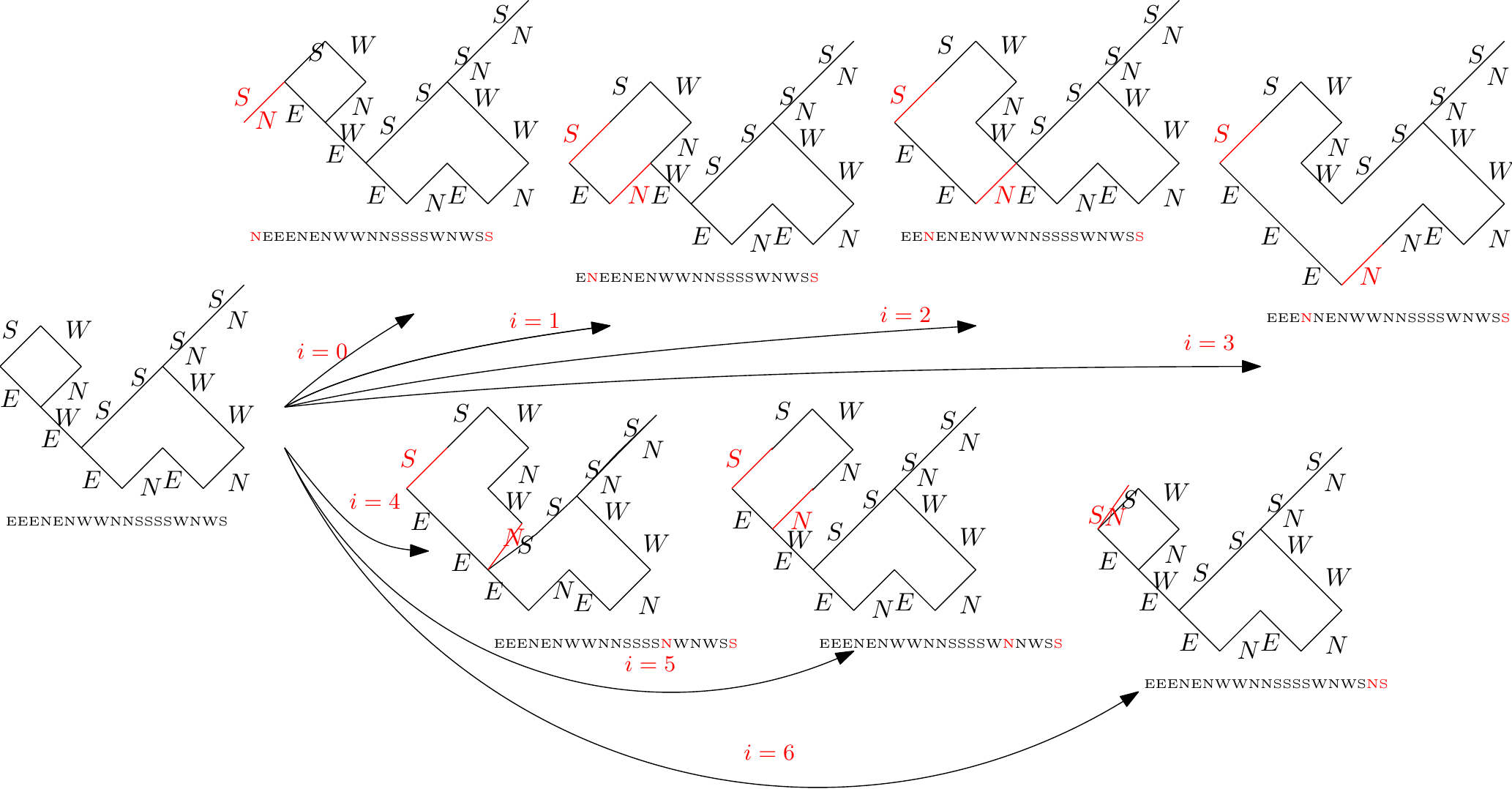}
	\caption{The $i$-augmentation of a generalized fighting fish, represented with walks and words.}
	\label{fig:OpGenFigFish}
\end{figure}
\end{itemize}

\begin{prop}
	For every generalized fighting fish $\FF{F}$ and for every $0 \leq i \leq \ell(\FF{F})$, we have :
	\begin{align*}
		\size(\maltese_i(\FF{F})) &= \size(\FF{F})+1\\
		\ell(\maltese_i(\FF{F})) &= i+1\\
	         \size(\FF{F}_1 \oplus \FF{F}_2) &= \size(\FF{F}_1)+\size(\FF{F}_2)+1\\ 		\ell(\FF{F}_1 \oplus \FF{F}_2) &= \ell(\FF{F}_1) + \ell(\FF{F}_2)+2
	\end{align*}
\end{prop}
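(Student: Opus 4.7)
The plan is to reduce the proposition to bookkeeping on the word-level definitions of $\maltese_i$ and $\oplus$, using as the only semantic input that every generalized fighting fish visits only non-negative latitudes. I would first record this latitude non-negativity by a one-line induction on the construction: $\varepsilon$ is trivially fine; rewriting $N^k \to E N^k W$ preserves every latitude, and rewriting $W^k \to N W^k S$ shifts the latitudes inside the seed up by $1$ and leaves others unchanged. Granted this, the two size identities fall out by counting letters, since $\size$ equals half the word length: $\maltese_i$ inserts exactly one $N$ and one $S$, and $\oplus$ inserts exactly one $E$ and one $W$ on top of pure concatenation.

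For $\ell(\maltese_i(\FF{F}))$, I would split $\maltese_i(\FF{F}) = \FF{G}_i \, N \, \overline{\FF{G}}_i \, S$ into its four consecutive pieces and count lattice points at latitude $0$ in each. The prefix $\FF{G}_i$ ends at the $i$-th visit at latitude $0$ of $\FF{F}$, contributing $i+1$ such visits. The inserted $N$ brings the walk to latitude $1$, and along $\overline{\FF{G}}_i$ the latitudes are those of the corresponding portion of $\FF{F}$ shifted by $+1$, which by the preliminary remark are all $\geq 1$, contributing no visit. The closing $S$ lands on one final point at latitude $0$. Totalling $i+2$ visits yields $\ell(\maltese_i(\FF{F})) = i+1$.

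For $\ell(\FF{F}_1 \oplus \FF{F}_2)$, I would split $\FF{F}_1 E \FF{F}_2 W$ similarly. The block $\FF{F}_1$ contributes $\ell(\FF{F}_1)+1$ visits; the vertex reached just after $E$ is a fresh visit at $(1,0)$; the block $\FF{F}_2$ is a translate by $(1,0)$ of $\FF{F}_2$ viewed from its own origin, so it shares its latitude profile and carries $\ell(\FF{F}_2)+1$ visits, of which the initial one coincides with the post-$E$ vertex already counted; and the closing $W$ step adds one last visit at $(0,0)$. Summing gives $\ell(\FF{F}_1)+\ell(\FF{F}_2)+3$ visits, hence $\ell(\FF{F}_1 \oplus \FF{F}_2) = \ell(\FF{F}_1) + \ell(\FF{F}_2) + 2$.

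The only real obstacle, though a minor one, is avoiding double-counting at the seams in the $\oplus$ decomposition, where the post-$E$ vertex belongs simultaneously to two adjacent pieces. I would fix a clean attribution convention (for instance, assigning each vertex to the preceding step-piece) before reading off the totals, so that the tallies of visits from $\FF{F}_1$, from the inserted $E$, from $\FF{F}_2$, and from the inserted $W$ add up unambiguously.
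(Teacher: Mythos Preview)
Your proof is correct. The paper actually states this proposition without proof, treating it as an immediate consequence of the word-level definitions of $\maltese_i$ and $\oplus$; your write-up supplies exactly the details the authors suppress. The one piece of semantic input you need, namely that a generalized fighting fish never dips below latitude $0$, is the first assertion of the lemma immediately preceding the proposition in the paper (there proved by the same induction on $\bigtriangledown_k$ and $\bigtriangleup_k$ that you sketch). Your handling of the seam in the $\oplus$ count is the only place one could slip, and you treat it correctly: the endpoint of the inserted $E$ coincides with the initial vertex of the translated $\FF{F}_2$, so it must be counted once, giving $(\ell(\FF{F}_1)+1)+1+\ell(\FF{F}_2)+1$ visits and hence $\ell(\FF{F}_1)+\ell(\FF{F}_2)+2$ after subtracting one.
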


Remark that the $i$-augmentation  of a generalized fighting fish is a generalization of $\maltese_i$ defined for fighting fish in \ref{DecF}. Indeed for a fighting fish, the number of points at latitude $0$ corresponds exactly to the jaw length minus 1.

Given a generalized fighting fish $\FF{F}$ of size $n$, then we have two possible cases: 
\begin{figure}[H]
	\centering
	\includegraphics[scale=0.4]{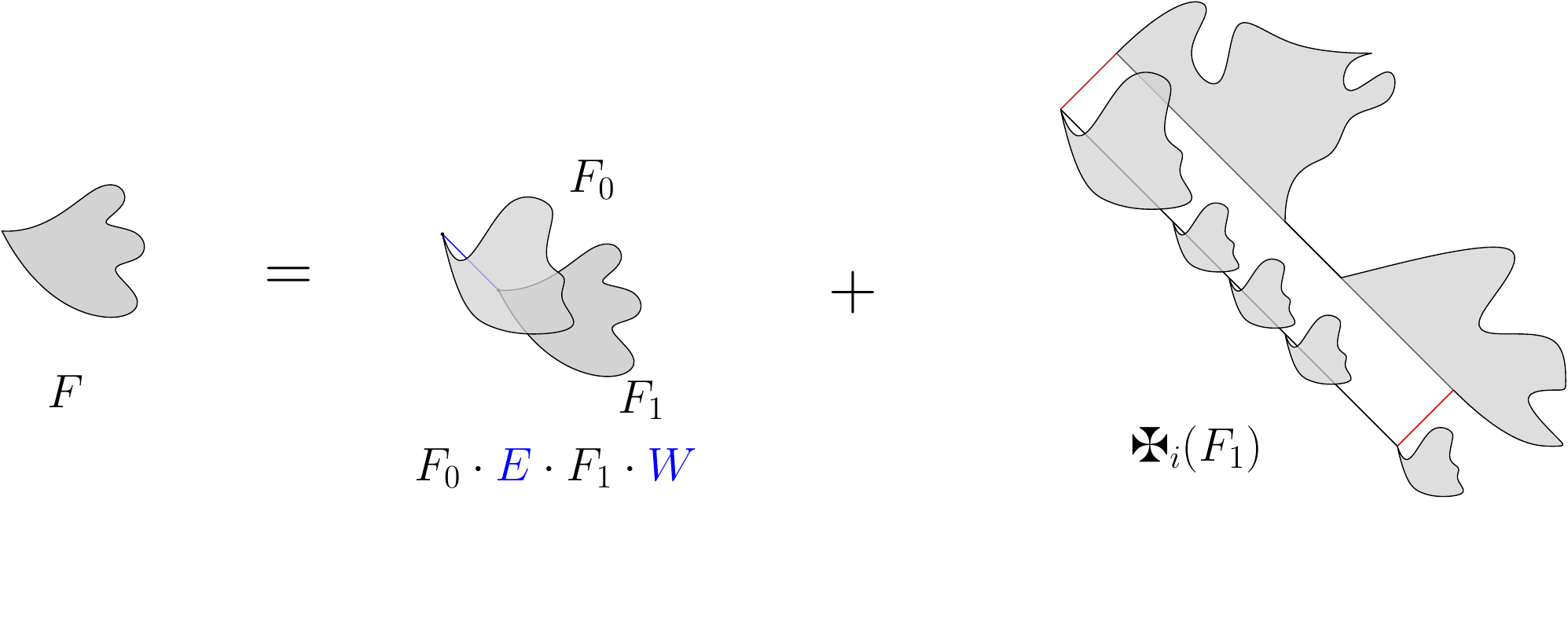}
	\caption{A decomposition for generalized fighting fish.}
	\label{Fig:GenFishDec}
\end{figure}

\begin{itemize}
\item Case I : $\FF{F}$ ends with a $W$ step. Then we consider the last step $s$ leaving the origin in the path associated with $\FF{F}$, then $\FF{F}=\FF{F}_1 s \FF{F}_2 W$.  In view of the growth operations $\bigtriangledown_k$ and $\bigtriangleup_k$ $s$ can only be a $E$ or $N$ step. If it was a $N$ step  the quadrant property enounced in lemma~\ref{fig:construction} applied to $N\cdot \FF{F}_2$
would imply that $\FF{F}_2$ would return to the origin before visiting any point with $0$ latitude. Therefore $s$ is a $E$ step and $\FF{F}=\FF{F}_1 \oplus \FF{F}_2=\FF{F}_1 E \FF{F}_2 W$ where $\FF{F}_1$ and $\FF{F}_2$ are possibly empty generalized fighting fish.

 \item Case II : $\FF{F}$ ends with a $S$ step. Then $\FF{F} = \maltese_i(\FF{F}_1)$ for some fighting fish $\FF{F}_1$ and some $0 \leq i \leq \ell(\FF{F}_1)$, where $\ell(\FF{F}_1)$ is the number of steps starting at latitude $0$ in $\FF{F}$. 
\end{itemize}

\subsection{Rooted planar maps and subclasses}

\subsection{A classical decomposition of rooted planar maps}

In order to present Tutte's classical \emph{root edge deletion} decomposition of a rooted planar map $\FF{M}$ we need to introduce the following operations:
\begin{itemize}
\item the concatenation $\FF{M}_1 \oplus \FF{M}_2$ of two planar maps $\FF{M}_1$ and $\FF{M}_2$ is obtained by adding a edge from the root corner of $\FF{M}_1$ to the root of $\FF{M}_2$ and by keeping the root of $\FF{M}_1$ (see Figure~\ref{fig:OpPlanarMapConc}).
\begin{figure}[H]
	\centering
	\includegraphics[scale=0.6]{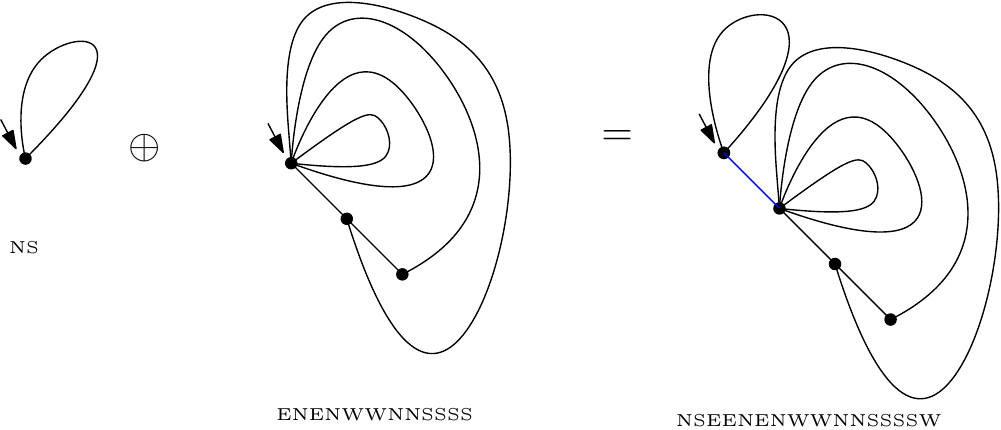}
	\caption{ The concatenation $\oplus$ of two planar maps $\FF{M}_1$ and $\FF{M}_2$ and the Mullin encodings with the rightmost spannig tree..}
	\label{fig:OpPlanarMapConc}
\end{figure}
  
\item let us denote by $c$ the number of corners in the infinite face of a rooted planar map $\FF{M}$, then  the \emph{$i$-augmentation} $\maltese_i(\FF{M})$ of  $\FF{M}$ (see Figure~\ref{fig:OpPlanarMap}), for $i$ an integer between $0$ and $c$, is obtained by adding a edge from the $i$-th corner to the right of the root of $\FF{M}$ (observe that we consider two corners around the root, the one on the left and the one on the right of the arrow indicating the root corner).
\begin{figure}[H]
	\centering
	\includegraphics[scale=0.6]{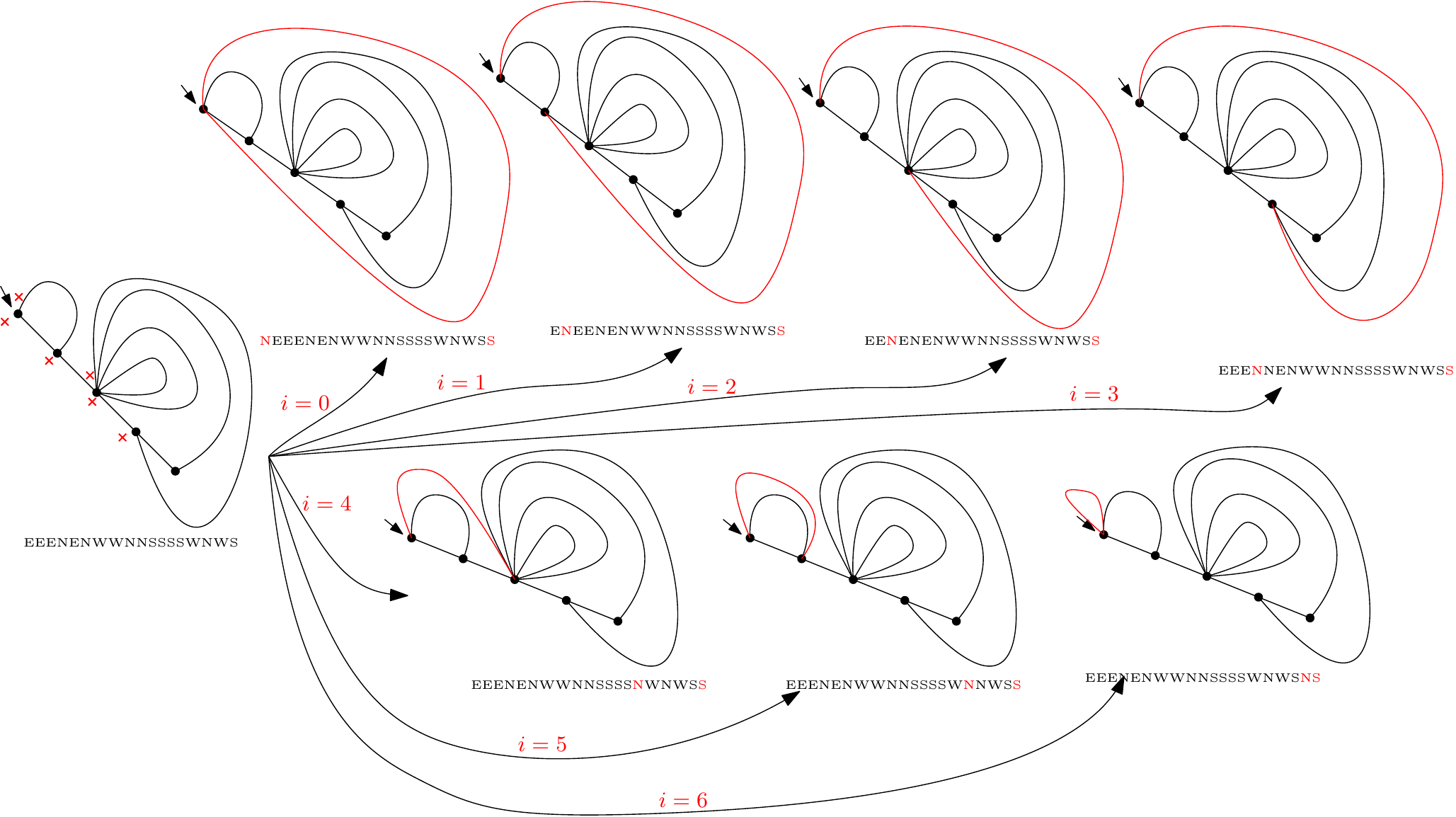}
	\caption{ The $i$-augmentation of a rooted planar map and the Mullin encodings with the rightmost spanning tree.}
	\label{fig:OpPlanarMap}
\end{figure}
\end{itemize}

\begin{figure}[ht]
	\centering
	\includegraphics[scale=0.5]{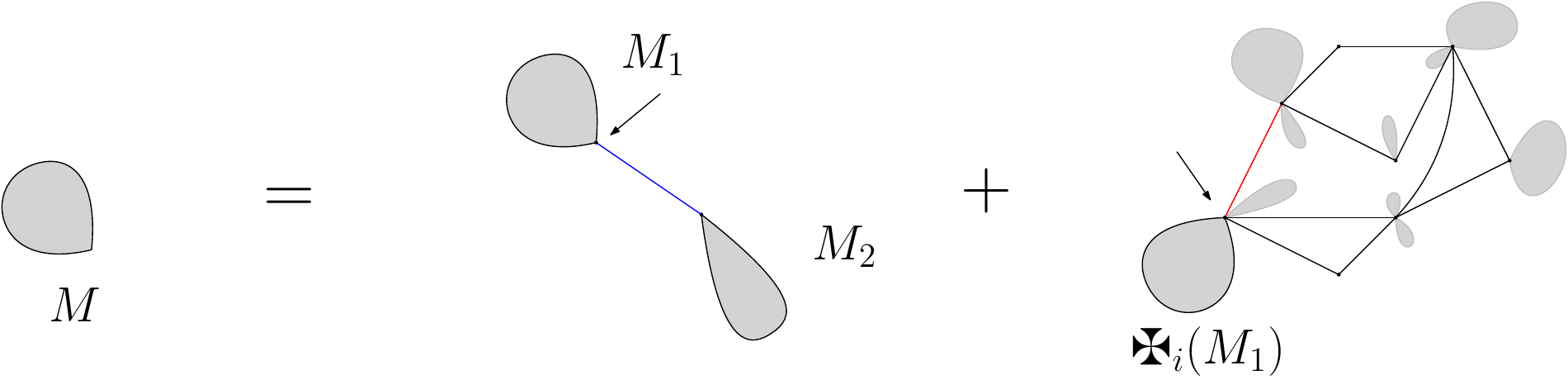}
	\caption{ A classical decomposition on rooted planar maps.}
	\label{Fig:Dec-Gen-Maps-2}
\end{figure}
Tutte's classical \emph{root edge deletion} decomposition of a rooted planar map $\FF{M}$ consists in removing the edge $e$ preceeding the root corner in counterclockwise order. There are two possible cases, represented in Figure~\ref{Fig:Dec-Gen-Maps-2}:
\begin{itemize}
\item  Case I : The edge $e$ is a disconnecting edge, so that there are two possibly empty rooted planar maps $(\FF{M}_1, \FF{M}_2)$, such that $\FF{M}=\FF{M}_1\oplus \FF{M}_2$. 
  \item  Case II : The edge $e$ is not disconnecting, then  $\FF{M} = \maltese_i(\FF{M}_1)$ for some rooted planar map $\FF{M}_1$ and some $0 \leq i \leq c(\FF{M}_1)$, where $c(\FF{M}_1)$ is the number of corners in the infinite face of $\FF{M}_1$. 

     \end{itemize}

\begin{prop}
	For every rooted  planar maps $\FF{M}$, $\FF{M}_1$ and $\FF{M}_2$, and for every $1 \leq i \leq c(\FF{M})$, we have :
	\begin{align*}
	\size(\maltese_i(\FF{M})) &= \size(\FF{M})+1\\
	c(\maltese_i(\FF{M})) &= i+1\\
	\size(\FF{M}_1 \oplus \FF{M}_2) &= \size(\FF{M}_1)+\size(\FF{M}_2)+1\\
	c(\FF{M}_1 \oplus \FF{M}_2) &= c(\FF{M}_1) + c(\FF{M}_2)+2
	\end{align*}
\end{prop}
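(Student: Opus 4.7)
The plan is to verify each of the four identities directly from the geometric descriptions of the two operations, with the two size equalities being essentially immediate and the two corner equalities reducing to a careful inspection of the boundary of the new infinite face.

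First I would note that $\maltese_i$ adds exactly one new edge to $\FF{M}$ (namely, the edge drawn from the root corner of $\FF{M}$ to its $i$-th outer corner), and that $\oplus$ likewise adds exactly one new edge (the one drawn from the root corner of $\FF{M}_1$ to the root of $\FF{M}_2$). Neither operation identifies or deletes any edge, so the two size identities follow at once.

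Next, for $c(\maltese_i(\FF{M})) = i+1$, I would index the corners along the boundary of the infinite face of $\FF{M}$ in counterclockwise order starting from the root corner. The new edge joins the root corner to the $i$-th such corner and therefore subdivides the old infinite face into two regions. The convention fixing the new root (read off Figure~\ref{fig:OpPlanarMap}) places the new infinite face on the side containing the first $i$ corners of the original infinite face; counting these $i$ corners together with the two fresh corners created at the endpoints of the new edge on that side yields exactly $i+1$. For $c(\FF{M}_1 \oplus \FF{M}_2) = c(\FF{M}_1) + c(\FF{M}_2)+2$, the new edge lies entirely in the boundary of the merged infinite face; walking around that boundary counterclockwise one traverses the old infinite face boundary of $\FF{M}_1$, picks up one fresh corner at an endpoint of the new edge, then traverses the old infinite face boundary of $\FF{M}_2$, then picks up a second fresh corner at the other endpoint of the new edge, giving the stated total.

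I expect the only real obstacle to be bookkeeping rather than substance: one must be careful about which corner is labelled $0$-th, whether the root corner itself is counted in the definition of $c(\FF{M})$, and which of the two new regions in the $\maltese_i$ case is declared to be the new infinite face. Once these conventions are pinned down from Figures~\ref{fig:OpPlanarMapConc} and~\ref{fig:OpPlanarMap}, the four equalities all follow by direct inspection, and in fact they match perfectly the corresponding identities already established for generalized fighting fish under $\oplus$ and $\maltese_i$, which is the isomorphism that drives the bijection of the next subsection.
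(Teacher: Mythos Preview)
The paper does not actually supply a proof of this proposition: like the three parallel propositions earlier in the paper (for fighting fish, nonseparable planar maps, and generalized fighting fish), it is stated and left as immediate from the definitions of $\oplus$ and $\maltese_i$. Your approach of verifying the four identities by direct inspection of the operations is therefore exactly what the paper implicitly has in mind, and your treatment of the two size identities and of $c(\FF{M}_1\oplus\FF{M}_2)$ is fine.

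There is one small slip in your count for $c(\maltese_i(\FF{M}))$. You write that one counts ``these $i$ corners together with the two fresh corners created at the endpoints of the new edge'', which would give $i+2$, not $i+1$. The correct bookkeeping is: of the old outer corners, only corners $1,\ldots,i-1$ survive intact on the side of the new infinite face (that is $i-1$ of them), while the two endpoint corners $0$ and $i$ are each split by the new edge and contribute one fresh corner apiece to that side, for a total of $(i-1)+2=i+1$. Equivalently, and perhaps more cleanly, one can argue via the degree of the infinite face: the new edge cuts the old outer face of degree $c(\FF{M})$ into a face of degree $i+1$ and a face of degree $c(\FF{M})-i+1$, and the rooting convention selects the former as the new infinite face. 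You already anticipated that the only subtlety here is bookkeeping, and that is indeed the case; once the count is corrected, your argument is complete.
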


\subsection{The bijection}

In the last two subsections, we presented a recursive decomposition for generalized fighting fish and recalled the classical decomposition for rooted  planar maps. Since these two decompositions are isomorphic, we are now able to define a bijection $\xi$ from rooted planar maps to generalized fighting fish. Let $\FF{M}$ be a rooted planar map, we examine all possible cases :
\begin{itemize}
    \item Case I : the  root edge $e$ of $\FF{M}$' is a disconnecting one, that is, if removed it disconnects $\FF{M}$ into two rooted planar maps $\FF{M}_1$ and $\FF{M}_2$, then $\xi(\FF{M}) = \xi(\FF{M}_1) E \xi(\FF{M}_2) W$.
\item Case II :  the root edge $e$  of $\FF{M}$ is not a disconnecting one, that is $\FF{M} = \maltese_i(\FF{M}_1)$ for some rooted planar map $\FF{M}_1$ and some $0 \leq i \leq c(\FF{M}_1)$, then  we set $\xi(\FF{M}) = \maltese_i(\xi(\FF{M}_1))$.

\end{itemize}

\begin{thm}
	$\xi$ is a bijection between rooted planar maps with $n$ edges, $i+1$ vertices, $j+1$ faces and $k$ corners in the infinite face  and generalized fighting fish of size $n$ with $i$ left lower free edges, $j$ right lower free edges and $k$ non-initial visits at latitude $0$. Moreover, for a rooted planar map $\FF{M}$, $\xi(\FF{M})$ is the Lehman-Lenormand encoding of M, that is the Mullin encoding of the tree-rooted planar map $(\FF{M},T)$ where $T$ is the rightmost depth-first search spanning tree of $\FF{M}$. As a consequence, $\xi$ preserves duality and its restriction to non separable planar maps is $\Phi$.
\end{thm}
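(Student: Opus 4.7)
The statement bundles four claims: (i) $\xi$ is a bijection with the stated statistics; (ii) $\xi(\FF{M})$ is the Mullin encoding of $(\FF{M}, T)$ where $T$ is the rightmost DFS spanning tree of $\FF{M}$; (iii) $\xi$ preserves duality; (iv) the restriction of $\xi$ to $\mathcal{NS}$ is $\Phi$.

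For (i), the two preceding subsections exhibit identically structured recursive decompositions for rooted planar maps and generalized fighting fish (Case I $\leftrightarrow$ Case I, Case II $\leftrightarrow$ Case II), and $\xi$ is defined by pairing them up; hence $\xi$ is a well-defined bijection by structural induction on the number of edges $n$. The statistic correspondence (edges $\leftrightarrow$ size, vertices $\leftrightarrow$ $E$-steps, faces $\leftrightarrow$ $N$-steps, outer corners $\leftrightarrow$ latitude-$0$ visits) then follows case by case by combining the proposition on size/$c$/vertices/faces for maps with the analogous proposition on size/$\ell$/$E$-steps/$N$-steps for generalized fighting fish.

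For (ii), I proceed by induction on $n$. In Case I ($\FF{M} = \FF{M}_1 \oplus \FF{M}_2$), the new root edge $e$ is a bridge linking the root vertex of $\FF{M}_1$ to that of $\FF{M}_2$; the rightmost DFS on $\FF{M}$, started at the root corner (which lies in $\FF{M}_1$), first explores all of $\FF{M}_1$ with its own rightmost DFS tree $T_1$, then crosses $e$ into the still-unvisited component of $\FF{M}_2$, continues with $T_2$, and returns along $e$. Hence $T = T_1 \cup \{e\} \cup T_2$, and the counterclockwise tour of the associated blossoming tree produces $\xi(\FF{M}_1)\, E\, \xi(\FF{M}_2)\, W$, matching the recursive definition. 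In Case II ($\FF{M} = \maltese_i(\FF{M}_1)$), the new root edge $e$ has both endpoints in $\FF{M}_1$; the DFS explores all of $\FF{M}_1$ with the tree $T_1$ before arriving at $e$ from the root vertex, so $e$ is rejected as a non-tree edge and $T = T_1$. The edge $e$ then contributes an opening stem at the $i$-th latitude-$0$ position of the tour (because its far endpoint is attached at the $i$-th corner of the root face of $\FF{M}_1$) and a closing stem at the very end (at the root corner itself), producing $\FF{G}_i\, N\, \overline{\FF{G}}_i\, S = \maltese_i(\xi(\FF{M}_1))$. The main technical obstacle is controlling the order in which the rightmost DFS procedure picks up edges at the root vertex after $\oplus$ or $\maltese_i$; this requires careful bookkeeping of the counterclockwise order around the root vertex before and after the operation, in particular the verification that the newly inserted root edge $e$ is the last edge at the root vertex to be considered by the DFS.

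Claim (iii) follows from Mullin's theorem combined with the observation that the rightmost DFS spanning tree of $\FF{M}^*$ is precisely the edge-complementary tree $T^*$ of the rightmost DFS spanning tree $T$ of $\FF{M}$ — the classical Lehman-Lenormand self-duality property. Alternatively, one may verify (iii) inductively by noting that duality swaps Tutte's Cases I and II while conjugating $\oplus$ with a matching $\maltese_i$ operation, in agreement with the corresponding transformation on walks (reversal plus letter substitution). Claim (iv) is immediate from (ii) together with the analogous property of $\Phi$ already proved in Section~\ref{FFvsNSPM}: for a nonseparable $\FF{M}$, both $\xi(\FF{M})$ and $\Phi(\FF{M})$ are the Mullin encoding of $(\FF{M}, T_\mathrm{rDFS})$, hence they are equal.
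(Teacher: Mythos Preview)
Your proposal is correct and follows essentially the same approach as the paper: both establish the bijection from the isomorphism of the two recursive decompositions, and both prove the Mullin-encoding identification by induction on the number of edges with the same two cases. You are in fact slightly more careful than the paper in two respects: you spell out arguments for (iii) and (iv), which the paper merely asserts as consequences, and you explicitly flag the bookkeeping issue at the root vertex (that the newly inserted root edge must be the last edge encountered by the DFS there), which the paper handles with a single clause.
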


\begin{proof}
	Since the two recursive decompositions are isomorphic, $\xi$ is a well-defined bijection between rooted planar maps with $n$ edges and  $k$ corners in the infinite face and generalized fighting fish of size $n$ and $k$ points at latitude $0$. We proceed by induction on the size of rooted planar maps to prove that $\xi$ maps rooted planar maps to their rightmost depth-first search Mullin encoding. Let $\FF{M}$ be a rooted planar map. We distinguish between the two possible cases :
	\begin{itemize}
	\item Case I : the root edge $e$ of $\FF{M}$ is a disconnecting one, then $\xi(\FF{M}) = \xi(\FF{M}_1) E \xi(\FF{M}_2) W$. Then, by induction hypothesis, $\xi(\FF{M}_1)$ and $\xi(\FF{M}_2)$ are the respective Mullin encodings of $\FF{M}_1$ and $\FF{M}_2$ endowed with their respective rightmost DFS spanning tree $T_1$ and $T_2$. The rightmost DFS spanning tree $T$ of $\FF{M}$ is obtained by creating an edge between  the root of $T_1$ and the root of $T_2$, and by performing the counterclockwise tour of $\FF{M}$ around $T$, we get that the Mullin encoding of $(\FF{M},T)$ is exactly  $\xi(\FF{M}_1) E \xi(\FF{M}_2) W$, that is $\xi(\FF{M})$.
          
\item Case II : if $\FF{M} = \maltese_i(\FF{M}_1)$ for some rooted planar map $\FF{M}_1$ and some $0 \leq i \leq c(\FF{M}_1)$, then by the induction hypothesis, $\xi(\FF{M}_1)$ is the Mullin encoding of $\FF{M}_1$ endowed with its rightmost DFS spanning tree $T_1$. Then the rightmost DFS spanning tree of $\FF{M}$ is also $T_1$, because the added edge has one extremity which is the last half edge of the root vertex and it is discovered from its other side in the DFS.. Performing the counterclockwise tour of $\FF{M}$ around $T$, we get that the Mullin encoding of $(\FF{M},T)$ is exactly  $\maltese_i(\xi(\FF{M}_1))$, that is $\xi(\FF{M})$.

 \end{itemize}
\end{proof}

Let us also remark that the bijection $\xi$ preserves the following statistics:
\begin{itemize}
  \item The number of \emph{down bridges} in the generalized fighting fish $\FF{F}$, \emph{i.e.} the number of places where $\FF{F}$ can be decomposed as $\FF{F}=\FF{F}_1 E \FF{G} W \FF{F}_2$ where $\FF{F}_1 \FF{F}_2$ and $\FF{G}$ are (possibly empty) generalized fighting fish, corresponds to the number of bridges in the planar map $\FF{M}$. 
  \item The number of \emph{up bridges} in the generalized fighting fish $\FF{F}$, \emph{i.e.} the number of places where $\FF{F}$ can be decomposed as $\FF{F}=\FF{F}_1 N \FF{G} S \FF{F}_2$ where $\FF{F}_1 \FF{F}_2$ and $\FF{G}$ are (possibly empty) generalized fighting fish, corresponds to the number of loops in the planar map $\FF{M}$. 
\end{itemize}

  \begin{definition}
    \begin{itemize}
    \item A  {\em generalized fighting fish without up bridges}  denoted $\mathcal{UF}$ consists of the generalized fighting fish obtained from the empty path using only operations $\bigtriangledown_k$ with $k \geq 0$ and $\bigtriangleup_k$ with $k \geq 1$.
      \item A {\em generalized fighting fish without down bridges}, denoted $\mathcal{DF}$, consists of the generalized fighting fish obtained from the empty path using only operations $\bigtriangledown_k$ with $k \geq 1$ and $\bigtriangleup_k$ with $k \geq 0$.
      \end{itemize}
    
  \begin{itemize}
       \item A {\em loopless planar map} is a planar
       map without loops, i.e., edges that start and end at the same vertex. We denote by $\mathcal{LM}$ the class of loopless planar maps.
     \item A {\em bridgeless planar map} is a planar
       map without bridges, i.e., edges whose deletion disconnects the map. We denote by $\mathcal{BM}$ the class of bridgeless planar maps.
       \end{itemize}
  \end{definition}
  \begin{figure}[ht]
	\centering
	\includegraphics[scale=.8]{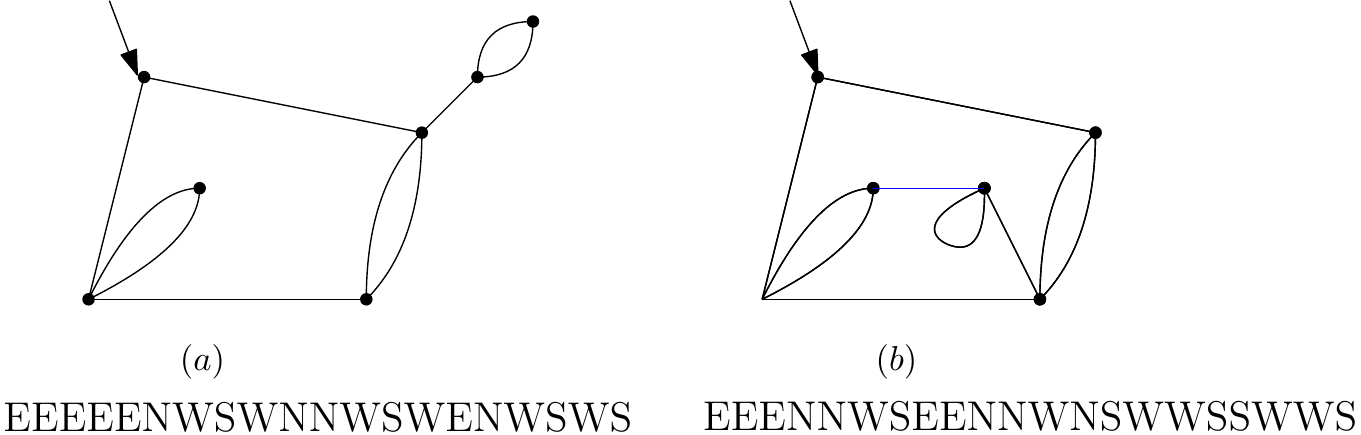}
	\caption{$(a)$ A loopless planar map and its associated generalized fighting fish without up bridges; $(b)$ A bridgeless planar map and its associated fighting fish without down bridges}
	\label{fig:BridgeLoopLess}
\end{figure}

  In Figure~\ref{fig:BridgeLoopLess} there is an example of a loopless and a bridgeless planar maps and their corresponding generalized fighting fish (in terms of words) by the bijection $\xi$.

\begin{prop}
  The previous bijection specializes into bijections between $\mathcal{UF}_n$ and $\mathcal{LM}_n$, between $\mathcal{BF}_n$ and $\mathcal{BM}_n$, and between $\mathcal{FF}_n$ and $\mathcal{NS}_n$.

\end{prop}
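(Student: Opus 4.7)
The plan is to prove each of the three restrictions by combining the two statistic-preservation properties of $\xi$ just stated (up bridges correspond to loops, down bridges correspond to bridges) with the bijection $\Phi$ of Section~\ref{FFvsNSPM}.

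For the restrictions $\mathcal{UF}_n \leftrightarrow \mathcal{LM}_n$ and $\mathcal{BF}_n \leftrightarrow \mathcal{BM}_n$, the first step is to identify the inductively-defined classes $\mathcal{UF}$ and $\mathcal{BF}$ with the classes of generalized fighting fish having no up bridges and no down bridges, respectively. For $\mathcal{UF}$, the forward inclusion follows by induction on the incremental construction: any $\bigtriangledown_k$ operation adds no $N$ or $S$ letter, and any $\bigtriangleup_k$ with $k \geq 1$ inserts a block $NW^kS$ whose interior carries a net longitude displacement of $-k$; since all subsequent operations are balanced in both coordinates, this interior never becomes an excursion, and hence the new $N$-$S$ pair never forms an up bridge. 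The reverse inclusion is proved by induction on size using the recursive decomposition of Section~\ref{sec1}: in Case~I, $\FF{F} = \FF{F}_1 \oplus \FF{F}_2$, up bridges split between $\FF{F}_1$ and $\FF{F}_2$ (the outer $E \cdots W$ pattern not being an up bridge), so the induction hypothesis applies; in Case~II, $\FF{F} = \maltese_i(\FF{F}_1)$, the outer $N \cdots S$ pair forms an up bridge precisely when both pieces $\FF{G}_i$ and $\overline{\FF{G}}_i$ of $\FF{F}_1$ are themselves GFFs, and this corresponds on the map side to an augmentation $\maltese_i(\FF{M}_1)$ whose added edge starts and ends at the root vertex, i.e.\ creates a loop. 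Once the identification is established, the preserved statistic yields $\xi^{-1}(\mathcal{UF}_n) = \mathcal{LM}_n$. The analogous argument for $\mathcal{BF}_n \leftrightarrow \mathcal{BM}_n$ swaps $(E,W)$ with $(N,S)$ and uses the preserved statistic on down bridges and bridges, noting that $\bigtriangledown_0$ is the only operation producing an empty $E$-$W$ pair that can frame a complete GFF.

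For the third restriction $\mathcal{FF}_n \leftrightarrow \mathcal{NS}_n$, no additional argument is needed: the theorem concluding Section~\ref{sec1} shows that $\xi$ restricted to $\mathcal{NS}$ equals the bijection $\Phi$ of Section~\ref{FFvsNSPM}, and the theorem of that section states that $\Phi$ is a bijection between $\mathcal{NS}_n$ and $\mathcal{FF}_n$; composing these identifies $\xi(\mathcal{NS}_n) = \mathcal{FF}_n$.

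The main technical obstacle lies in the double induction for the characterization of $\mathcal{UF}$ (and analogously $\mathcal{BF}$), and in particular in reconciling the incremental construction via $\bigtriangleup_k, \bigtriangledown_k$ with the recursive $\oplus, \maltese_i$ decomposition, so as to identify precisely when an outer $\maltese_i$ augmentation produces a loop-creating configuration on the map side; the net-displacement invariance of the incremental operations is the crucial ingredient that prevents any interior inserted by $\bigtriangleup_k$ with $k\geq 1$ from ever becoming a full GFF and thereby upgrading its enclosing $N$-$S$ pair into an up bridge.
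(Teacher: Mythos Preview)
The paper does not give a proof of this proposition; it is meant as an immediate consequence of the two remarks just before the definition (up bridges correspond to loops, down bridges to bridges) together with the already–established fact that $\xi|_{\mathcal{NS}}=\Phi$. In particular the paper treats the equality $\mathcal{UF}=\{\text{GFF without up bridges}\}$ (and the dual one for $\mathcal{DF}$) as part of the definition, so that once the statistic correspondence is granted, the restrictions $\mathcal{LM}_n\leftrightarrow\mathcal{UF}_n$ and $\mathcal{BM}_n\leftrightarrow\mathcal{DF}_n$ follow at once.

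You go further and try to \emph{prove} the identification $\mathcal{UF}=\{\text{GFF without up bridges}\}$. Your forward inclusion is sound: every matching $N$--$S$ pair is created by a single $\bigtriangleup_k$, and the word it encloses keeps net longitude $-k$ under all later insertions, so for $k\ge1$ it can never become an excursion and hence never bounds an up bridge. The reverse inclusion, however, does not go through as written. You induct along the $\oplus,\maltese_i$ decomposition, but membership in $\mathcal{UF}$ is defined through the incremental operations $\bigtriangleup_k,\bigtriangledown_k$; you give no reason why $\FF{F}_1,\FF{F}_2\in\mathcal{UF}$ should force $\FF{F}_1\oplus\FF{F}_2\in\mathcal{UF}$, nor why $\FF{F}_1\in\mathcal{UF}$ should force $\maltese_i(\FF{F}_1)\in\mathcal{UF}$ when the outer $N\cdots S$ is not an up bridge. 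These closure properties are precisely what is needed for the induction, and they require their own argument (for instance, exhibiting an explicit $\bigtriangleup/\bigtriangledown$ construction of $\FF{F}_1 E\FF{F}_2 W$ from constructions of $\FF{F}_1$ and $\FF{F}_2$, and likewise realising $\maltese_i$ as a sequence of such operations). The appeal to ``the map side'' in your Case~II does not help, since at that point you are trying to establish a purely fish-side statement and invoking the map correspondence is circular. Your treatment of $\mathcal{FF}_n\leftrightarrow\mathcal{NS}_n$ via the theorem on $\Phi$ is correct and coincides with the paper's intent.
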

We point out that the length of the jaw of the generalized fighting fish, that is the length of the first sequence of $E$ steps, gives the number of edges of the infinity face minus $1$. An other nice property of the bijection is that the notion of duality in maps is translated into the natural notion of symmetry for generalized fighting fishes : exchange the operations $\bigtriangleup_k$ and $\bigtriangledown_k$ during the construction.

\begin{figure}[H]
	\centering
	\includegraphics[page=1,scale=0.6]{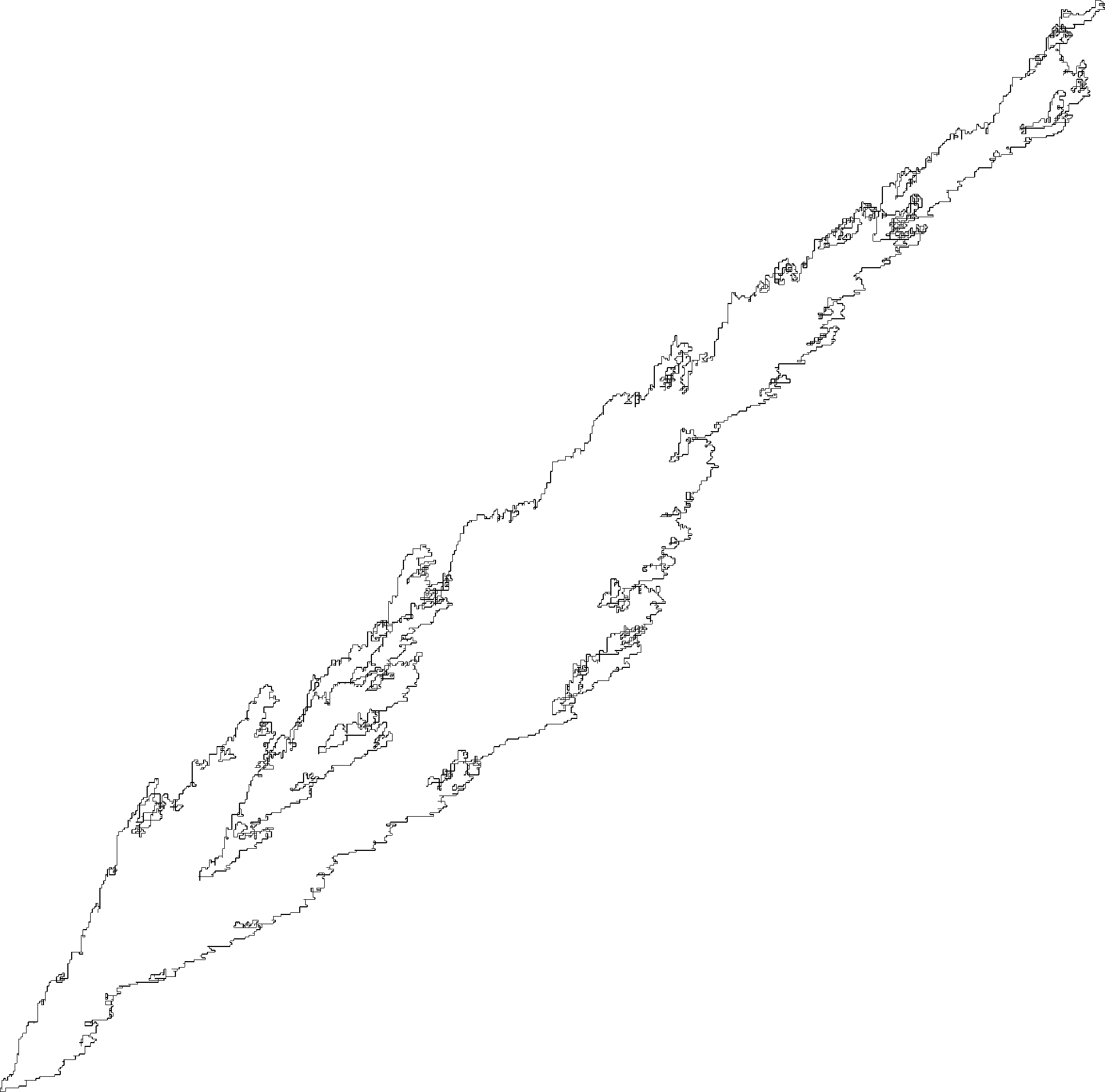}
	\caption{A uniform random fighting fish of size 10000.}
	\label{fig:BigFish}
\end{figure}

\section{Conclusion and perspectives}
We have shown that Mullin encoding gives, in the special case of Lehman-Lenormand code, a direct bijection between rooted planar maps and generalized fighting fish, that further specializes in a bijection between rooted planar maps and fighting fish. We observe that the Lehman-Lenormand code being based on a single DFS, it can be performed in linear time: therefore the known linear time Schaeffer's random generator~\cite{Sc99} for nonseparable planar maps and rooted planar maps (available at \texttt{https://www.lix.polytechnique.fr/Labo/Gilles.Schaeffer/PagesWeb/PlanarMap/}) immediately gives linear random generators for fighting fish and generalized fighting fish. In Figure~\ref{fig:BigFish} we show a random fighting fish of size $10000$ obtained by adapting this generator.

While Lehman-Lenormand code dates back to the early seventies, the main contribution of this article is to enlighten that the counterclockwise codes of (nonseparable) rooted planar maps endowed with their rightmost DFS spanning trees can be given a very natural geometric interpretation as a branching surface obtained by gluing unit cells. This geometric point of view based on fighting fish already helped in proving a simple formula for the distance of intervals in the Tamari lattice, using the correspondence of the area statistic on fighting fish, where the area of a  (generalized) fighting fish is the number of cells it contains. This raises the problem of finding a natural definition of the corresponding (by $\xi$) statistic on rooted planar maps. \\
Another perspective would be to unify the two extensions of fighting fish we presented in this paper and th preceeding one. Recall that the class of extended fighting fish is obtained by allowing also the operation of replacing an occurrence of the substring $WN$ by a vertical step $V$ in the definition of fighting fish. It would be nice to have a general model of fish embedding the two classes of generalized and extended fighting fish, and that would fit nicely with the bijections we defined, either with Tamari intervals and rooted planar maps.
\bibliographystyle{plain}
\bibliography{fpsac022}

\end{document}